\newcommand{\Z}{{\mathbb Z}}
\renewcommand{\S}{{\mathbb S}}
\newcommand{\R}{{\mathbb R}}
\newcommand{\C}{{\mathbb C}}
\newcommand{\D}{{\mathbb D}}
\newcommand{\N}{{\mathbb N}}
\def\SL{{\rm SL}}
\def\z{\overline{z}}
\renewcommand{\matrix}[1]{\left(\begin{array}{cc} #1\end{array}\right)}
\newcommand{\wt}[1]{\widetilde{#1}}
\newcommand{\wh}[1]{\widehat{#1}}
\theoremstyle{plain}
\newtheorem{theorem}{Theorem}
\newtheorem{lemma}{Lemma}
\newtheorem{proposition}[lemma]{Proposition}
\newtheorem{remark}[lemma]{Remark}
\newtheorem{definition}[lemma]{Definition}
\DeclareFontFamily{U}{mathx}{\hyphenchar\font45}
\DeclareFontShape{U}{mathx}{m}{n}{
      <5> <6> <7> <8> <9> <10>
      <10.95> <12> <14.4> <17.28> <20.74> <24.88>
      mathx10
      }{}
\DeclareSymbolFont{mathx}{U}{mathx}{m}{n}
\DeclareMathAccent{\widecheck}{0}{mathx}{"71}
\DeclareMathAccent{\widetilde}{0}{mathx}{"72}
\DeclareMathAccent{\widebar}{0}{mathx}{"73}
\DeclareMathAccent{\widevec}{0}{mathx}{"74}
\DeclareMathAccent{\widehat}{0}{mathx}{"70}
\DeclareMathAccent{\widefrown}{0}{mathx}{"75}
\DeclareMathAccent{\chinesehat}{0}{mathx}{"69}
\def\SL{{\rm SL}}
\def\z{\overline{z}}
\renewcommand{\matrix}[1]{\left(\begin{array}{cc} #1\end{array}\right)}
\begin{document}

\title{Fuchsian DPW potentials for Lawson surfaces }

 \author{Lynn Heller}
 \author{Sebastian Heller}

\address{Institut f\"ur Differentialgeometrie\\
Welfengarten 1\\
30167 Hannover\\
Germany} 
\email{lynn.heller@math.uni-hannover.de}
 \noindent 
\address{Institut f\"ur Differentialgeometrie\\
Welfengarten 1\\
30167 Hannover\\
Germany} 
\email{seb.heller@gmail.com}

\begin{abstract}
The Lawson surfaces $\xi_{1,g}$ of genus $g$ are constructed by rotating and reflecting the Plateau solution $f_t$ with respect to a particular geodesic $4$-gon $\Gamma_t$ along its boundary, where $t= \tfrac{1}{2g+2}$ is an angle of  $\Gamma_t$.  In this paper we combine the existence and regularity of the Plateau solution $f_t$ in $t \in (0, \tfrac{1}{4})$ with topological information about the moduli space of Fuchsian systems on the 4-puncture sphere to obtain existence of a Fuchsian DPW potential $\eta_t$ for every $f_t$ with $t\in(0, \tfrac{1}{4}]$. Moreover, the coefficients of $\eta_t$ are shown to depend real analytically on $t$. This implies that the Taylor approximation of the DPW potential $\eta_t$ and of the area obtained at $t=0$ found in \cite{HHT2} determines these quantities for  all $\xi_{1,g}$. In particular, this leads to an algorithm to conformally parametrize all Lawson surfaces $\xi_{1,g}$.

\end{abstract}

\maketitle

\section*{Introduction}\label{sec:intro}

In 1970 Lawson \cite{Lawson} constructed embedded minimal surfaces of every genus in the round 3-sphere.  These are obtained from the Plateau solution of a geodesic polygon which is then reflected and rotated along its boundary. Using similar philosophy, Karcher-Pinkall-Sterling \cite{KPS} found minimal surfaces in $S^3$ with Platonic symmetries. Later Kapouleas and Yang \cite{KapYan, Kap} constructed high genus minimal surfaces by doubling a geodesic sphere. 

 Kusner \cite{Kusner} conjectured that the simplest Lawson surfaces $\xi_{1,g}$ minimize the Willmore energy among  immersions from a genus $g$ surface in generalization of the famous Willmore conjecture solved by Marques and Neves \cite{MN}. Though few examples of compact minimal surfaces of  a given genus $g\geq2$ are known, this conjecture is supported by computer experiments \cite{Kusner} where an arbitrary compact surface in the 3-sphere is deformed via an energy decreasing flow and converges to a shape resembling a Lawson surface. The Willmore energy of $\xi_{1,g}$ is strictly below $8\pi$ but converges to  $8\pi$ in the genus limit  see \cite{Kusner}. This coincides with the large genus energy limit of genus $g$ Willmore minimizers \cite{KLS}.  Moreover, the stability properties of these candidates, though as minimal surfaces rather than as Willmore surfaces,  was studied in \cite{KapWiyStability}. Due to the implicit way the Lawson surfaces are constructed, determining the geometric properties, such as computing their area (or Willmore energy), is very difficult though
in view of the Kusner conjecture very desirable.

In an effort to obtain more explicitness, a completely different approach to constructing minimal (and constant mean curvature) surfaces in the 3-sphere was taken in \cite{HHS, HHT, HHT2}.  In particular, the Lawson surfaces $\xi_{1,g},$ for large $g,$ have been reconstructed (i.e., independent from the Lawson's result)  via an implicit function theorem argument using methods from integrable systems. The method itself can be interpreted as a global version of the Weierstra\ss $\;$representation, which is also often referred to as DPW approach \cite{DPW} in this context. For tori the approach was pioneered by Hitchin \cite{Hitchin} and Pinkall-Sterling \cite{PS} around 1990, and Bobenko \cite{Bobenko} gave an explicit parametrization of all CMC tori in 3-dimensional space forms. \\

Consider hereby a conformally parametrized minimal immersion $f$ from a compact genus $g$ Riemann surface $M_g$ into the round $3$-sphere. Then, $f$ is harmonic giving rise to a symmetry of the Gauss-Codazzi equations  inducing an associated $\S^1$-family of (isometric) minimal surfaces on the universal covering of $M_g$ with rotated Hopf differential. The gauge theoretic counterpart of this symmetry is manifested in an associated $\C^*$-family 
of flat $\text{SL}(2,\mathbb C)$-connections $\nabla^\lambda$  \cite{Hitchin} on the trivial $\C^2$-bundle over $M_g$ solving the following {\em Monodromy Problem} 
\begin{enumerate}\label{closingconditions}
\item[(i)] conformality:  $\nabla^\lambda=\lambda^{-1}\Phi+\nabla-\lambda \Psi$ for a nilpotent $\Phi\in \Omega^{1,0}(M_g,\mathfrak{sl}(2,\mathbb C)) ;$
\item[(ii)]  intrinsic closing: $\nabla^\lambda$ is unitary for all $\lambda\in\S^1,$ i.e., $\nabla$ is unitary and $\Psi=\Phi^*$ with respect to the standard hermitian metric on $\underline{\C}^2$;
\item[(iii)] extrinsic closing: $\nabla^\lambda$ is trivial for $\lambda=\pm1.$
\end{enumerate}
The minimal surface can be reconstructed from the associated family of connections as the gauge between
$\nabla^{-1}$ and $\nabla^1.$ Constructing minimal surfaces is thus equivalent to writing down appropriate families of flat connections. 
The DPW method \cite{DPW} is a way to generate such families of flat connections on a Riemann surface from so-called {\em DPW potentials}, denoted by  $\eta = \eta^\lambda,$ $ \lambda \in \C^*,$ 
using loop group factorisation. In fact,   $\eta^\lambda$ fixes the gauge class of the connections $\nabla^\lambda$ as
$$d+ \eta^\lambda \in [\nabla^\lambda].$$ On simply connected domains $\mathbb U$, all DPW potentials give rise to minimal surfaces from $\mathbb U$. Whenever the domain has non-trivial topology, finding DPW potentials satisfying  
conditions equivalent to (i)-(iii) becomes difficult. 
 
Though successful in the case of tori, the first embedded and closed minimal surfaces of genus $g>1$ using DPW were only recently constructed in \cite{HHT}. This is due to the fact that, in contrast to tori, the fundamental group of a higher genus surface is non-abelian. A global version of DPW has been developed in \cite{He1, He2} under certain symmetry assumptions. The main challenge to actually construct higher genus minimal and CMC surfaces is to determine infinitely many parameters in the holomorphic ``Weierstra\ss''-data.

For large genus, we succeeded in computing these parameters via implicit function theorem, leading to an alternate existence proof of the Lawson surfaces $\xi_{1,g}$ in \cite{HHT, HHT2}. More explicitly, we construct families $f^t_\varphi$ of minimal and CMC surfaces, for $t\sim0$ and $\varphi \in (0, \tfrac{\pi}{2})$, starting at two geodesic spheres intersecting at angle $2 \varphi$ and deform its DPW potential in direction of the corresponding Scherk surface in $\R^3$ such that $f^t_{\tfrac{\pi}{4}}= \xi_{1,g}$ at $t= \tfrac{1}{2(g+1)}$. 
Through the implicit function theorem, we give in \cite{HHT2} an iterative algorithm to compute the Taylor expansion of the DPW-potentials $\eta^t_\varphi$ and the area Area$_\varphi(t)$ of $f^t_\varphi$ at $t=0.$ In particular, we obtain for $\varphi = \tfrac{\pi}{4}$ 
\begin{equation}\label{Area}
\text{Area}(f^t_{\tfrac{\pi}{4}}) \sim 8 \pi \left(1 - \log(2) t - \tfrac{9}{4}\zeta (3) t^3 + O(t^5) \right),
\end{equation}
where $\zeta$ is the Riemann $\zeta$-function. By the regularity statement of the implicit function theorem, the family of DPW-potentials $\eta^t_\varphi$ as well as the area Area$_\varphi(t)$ depends real analytically on the parameter $t \sim 0$. 

This paper is about quantitative results concerning the existence interval of the solutions $\eta_t := \eta^t_{\tfrac{\pi}{4}}.$ The idea is to use the properties of the Plateau solutions for all $t\in (0, \tfrac{1}{4}]$ to prove existence of the family DPW-potentials $\eta_t$ found in \cite{HHT2} on the same time interval. This covers all Lawson surfaces $\xi_{1,g}$. As a byproduct we obtain the real analyticity of $\eta_t$ and Area$_{\tfrac{\pi}{4}}(t)$ for all $t\in (0, \tfrac{1}{4}]$. Together with \cite{HHT2} this leads to an algorithm to computing the area and an explicit conformal parameterisation of $\xi_{1,g}$ for every genus by computing their Taylor expansions at $t=0.$ For every coefficient this involves solving a finite dimensional linear system in terms of multi-polylogarithms. In contrast to \cite{HHT, HHT2} and \cite{HHS} the existence result for the DPW potential in this paper relies on the existence and regularity of the Plateau solutions.

\subsection*{Acknowledgements}
We thank Reiner Sch\"atzle for providing the ideas to prove Theorem \ref{realanalyticity}.  Moreover, we thank Martin Traizet for various fruitful discussions.
The authors  are supported by the  {\em Deutsche Forschungsgemeinschaft} within the priority program {\em Geometry at Infinity}.

\section{Lawson surfaces revisited}\label{Lawson}  Consider  $\mathbb S^3\subset\C^2$ and the four points in $\mathbb S^3$ given by
  \[P_1=(1,0), \quad Q_1=(0,1),\quad P_2=(i,0),\quad Q_2=(0,e^{2\pi i t}).\]
  Let  $f= f_t \colon \mathbb D \rightarrow \mathbb S^3$ be the Plateau solution 
with respect to the closed geodesic 4-gon 
 \[\Gamma=\Gamma_t= P_1 Q_1P_2 Q_2,\]
where $\mathbb D$  is the closed disc of radius $1$ in $\C$ and $f(\partial \D) = \Gamma.$
As $f$ is immersed except at the points $P_1, P_2$ and $Q_1,$ $Q_2,$ we can consider the induced Riemann surface structure on 
 \[\mathring{\mathbb D} :=\mathbb D\setminus f^{-1}\{P_1,Q_1,P_2,Q_2\}.\]
 Note that the boundary of $\mathring{\mathbb D}$ consists of four connected components. Moreover, let $\mathcal G$ denote the group of  automorphisms generated by rotations by $e^{2\pi t i}$ in the $0\oplus\C$-plane.  The Plateau solution extends by reflections across the boundaries to a complete minimal surface 
 $$f^t \colon M_t \rightarrow S^3.$$ 
 The group $\mathcal G$ can  be considered as symmetries of the minimal surface by uniqueness of Plateau solution. The constructed complete minimal surface $M_t,$ is compact if and only if $t$ is rational and embedded of genus $g$ if $t= \tfrac{1}{2g+2}$. Many geometric properties can be derived from rational $t$ from the compactness of $M_t$. With the following theorem these properties carry over to all $t \in (0, \tfrac{1}{4}]$ using continuity arguments.
 
   \begin{theorem}\label{realanalyticity}
 The Plateau solution $f_t$ depends (up to reparametrization) real analytically on the angle $t$ for $t \in(0,\tfrac{1}{4}]$
 \end{theorem}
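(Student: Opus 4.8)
The plan is to exhibit the assignment $t \mapsto f_t$ as the solution of a real-analytic family of nonlinear elliptic boundary value problems and to invoke the analytic implicit function theorem. Fix $t_0 \in (0,\tfrac14]$ with Plateau solution $f_{t_0}$. First I would remove the two sources of non-uniqueness. The conformal reparametrization freedom on $\mathring{\D}$ is killed by normalizing the conformal structure, for instance by prescribing the preimages on $\partial\D$ of three of the four vertices; the remaining conformal modulus (the cross-ratio of the four preimages) then becomes a single scalar unknown to be solved for together with the surface. The geometric freedom is mild: only the vertex $Q_2=(0,e^{2\pi i t})$ varies with $t$, and it does so real-analytically, so the four boundary geodesics of $\Gamma_t$ — and hence the totally geodesic $\S^2$'s of $\S^3$ containing them — constitute a real-analytic family in $t$.

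Next I would set up the governing operator. Writing nearby minimal immersions as normal graphs $u$ over $f_{t_0}$, the minimal surface system together with the requirement that $\partial\D$ map into the four fixed geodesics can be encoded, after fixing the tangential gauge, as a real-analytic map $\mathcal{M}(u,t)=0$ between suitable Banach spaces with $\mathcal{M}(0,t_0)=0$; the contour condition linearizes to an oblique (Neumann-type) boundary condition, so the problem is well suited to perturbation theory. The delicate point here is the behavior at the four vertices, where the arcs meet at angles that themselves depend on $t$: the graph function $u$ carries controlled conical singularities there, so $\mathcal{M}$ must be defined on weighted Hölder or Sobolev spaces adapted to the opening angles. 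Away from the vertices the Schwarz reflection (rotation) principle extends the surface real-analytically across the geodesic arcs, turning boundary regularity into interior elliptic regularity, so the only genuine analysis is localized at the corners.

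The linearization $D_u\mathcal{M}(0,t_0)$ is the Jacobi operator of $f_{t_0}$ together with the linearized boundary conditions, and the crux is its invertibility on the chosen spaces. For this I would use that $f_{t_0}$ is area-minimizing, hence stable, so its Jacobi form is nonnegative; combined with uniqueness of the Plateau solution one expects strict stability, i.e. the absence of nontrivial Jacobi fields compatible with the boundary conditions once the conformal gauge is fixed and the discrete symmetry group $\mathcal{G}$ and the reflections are quotiented out. Granting invertibility, the analytic implicit function theorem yields a real-analytic family $t\mapsto u(t)$ near $t_0$ solving $\mathcal{M}(u(t),t)=0$; by uniqueness of the minimizer the corresponding surfaces agree with $f_t$ up to reparametrization. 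Since $t_0$ is arbitrary this propagates the conclusion over all of $(0,\tfrac14]$, with the endpoint $t=\tfrac14$ treated by a one-sided version.

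I expect the principal obstacle to be exactly this invertibility, i.e. the nondegeneracy of the Jacobi operator in the presence of the corner singularities: one must choose the weights so that the operator is Fredholm of index zero and then exclude kernel elements, which means upgrading the global minimality and uniqueness into a quantitative spectral gap at $f_{t_0}$. A secondary difficulty is the careful bookkeeping of the two gauge freedoms — conformal reparametrization and the $t$-dependent opening angles at the vertices — so that the reduced system is genuinely well-posed; allowing the conformal modulus of $\mathring{\D}$ to vary analytically with $t$ as the extra scalar unknown introduced above should reconcile the moving corner data with the fixed-domain formulation.
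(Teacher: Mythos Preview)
Your strategy coincides with the paper's: analytic implicit function theorem applied to the minimal surface equation, with the Jacobi operator as linearization and strict stability as the crux. The paper is more economical on precisely the two points you flag as obstacles. The nondegeneracy you say you ``expect'' and name as the principal obstacle is in fact available off the shelf: the paper invokes Fischer-Colbrie--Schoen \cite{FCS} together with Kapouleas--Wiygul \cite{KapWiyStability} for strict stability of the Plateau piece and for the fact that the fundamental domain is graphical, and then uses the uniqueness of the minimal disc with boundary $\Gamma_t$ (again \cite{KapWiyStability} or \cite{Lawson}) to identify the implicitly constructed family with the actual Plateau solutions. As for the moving boundary and the corners, rather than introducing weighted spaces and an extra conformal-modulus unknown, the paper simply composes with a real-analytic family of linear maps $\Psi_t:\R^4\to\R^4$ carrying $\Gamma_{t_0}$ onto $\Gamma_t$, so that one works throughout in a fixed $W^{2,2}$ space with fixed boundary data; the corner behaviour is absorbed into the cited graphicality and stability statements rather than handled by hand. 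Finally, the neighbourhoods of $t=0$ and $t=\tfrac14$ are dealt with separately via the DPW constructions of \cite{HHT2} and \cite{HHS}, not by the Plateau argument.
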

 
 \begin{proof}
For  $t\sim 0$ and $t\sim \tfrac{1}{4}$ real analyticity of the Plateau solution in $t$ follows from the implicit function theorem and the real analyticity of the monodromy problem solved in  \cite{HHT} and \cite{HHS}, respectively.

For $t \in (0, \tfrac{1}{4})$ consider the Banach space $W^{2,2}_t(\mathbb D, S^3)$ of  maps from $\mathbb D$ to $S^3$ such that the boundary $\partial \mathbb D$ is mapped to $\Gamma_t$.
We want to show that the family of Plateau solutions $f_t$  is real analytic in $t$ at every $t_0\in (0, \tfrac{1}{4})$.

 Consider the linear map 
$$\Psi_t \colon S^3\subset \R^4 \rightarrow S^3\subset \R^4; \quad \Psi_t= \begin{pmatrix}1&0&0&0\\0&1&0&0\\0&0&1 &-\tfrac{1}{\tan(2\pi  t_0)}+ \frac{\cos(2\pi t)}{\sin(2\pi t_0)}\\0&0&0&\frac{\sin(2\pi t)}{\sin(2\pi t_0)}\end{pmatrix}$$ 

Then $\Psi_t(\Gamma_{t_0}) = \Gamma_t$ and we obtain a bijection between $W^{2,2}_t(\mathbb D, S^3)$ and $W^{2,2}(\mathbb D):=W^{2,2}_{t_0}(\mathbb D, S^3)$ which depends real analytically on $t.$

For $t \sim  t_0 $, all solutions of the Euler-Lagrange equation with boundary $\Gamma_t$ which are $W^{2,2}(\mathbb D)$-close to $f_{t_0}$ can be classified via the implicit function theorem using the strict stability of Plateau solutions of $\Gamma_{t_0}$ by  \cite[Theorem 1]{FCS}, see also \cite[Page 20, proof Lemma 6.7 (ii)]{KapWiyStability} and the fact that the fundamental piece is graphical \cite[Lemma 4.7]{KapWiyStability}. This gives rise to a unique real analytic family $(\wt f_t)_{t \sim t_0}$ of minimal surfaces of disc type with boundary $\Gamma_t$ and $\wt f_{t_0} = f_{t_0}$ which are W$^{2,2}(\mathbb D)$-close to $f_{t_0}$. These solutions $\wt f_t$ must then coincide (up to reparametrization) with the Plateau solutions $f_t$ for all $t \sim t_0$  due to the uniqueness of minimal discs with boundary $\Gamma_t$ \cite[Theorem 4.1]{KapWiyStability} or \cite{Lawson}.

 \end{proof}

 \subsection{Riemann surface structures }
 
Let $\overline{\mathring{\mathbb D}}$ denote the complex conjugate Riemann surface of $\mathbb D$. Define the Riemann surface
 \[\underline\Sigma=\mathring{\mathbb D_1}\cup_\sim \overline{\mathring{\mathbb D_1}}\]
 glued together along the four boundary components via reflections across the four (geodesic) edges
 \[P_1Q_1,\; Q_1P_2,\; P_2Q_2,\; Q_2P_1.\]
 Note that the minimal surface into $\mathbb S^3$ is not  
 {well-defined as a map from $\underline\Sigma$.
 \begin{proposition}\label{pro:cftsigma}
 The Riemann surface $\underline\Sigma$ is (biholomorphic to)
 \[\C P^1\setminus\{\infty, 0,1, -1\}.\]
 \end{proposition}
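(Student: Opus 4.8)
The plan is to recognize $\underline\Sigma$ as the Schwarz double of the bordered surface $\mathring{\mathbb D}$ and then to pin down its conformal type by exploiting the isometric symmetries of the geodesic $4$-gon $\Gamma_t$. Topologically the matter is immediate: $\mathring{\mathbb D}$ is a disc with the four corner preimages deleted from its boundary, hence a genus-$0$ surface whose four boundary components are the four open edges. Doubling across these four arcs yields a closed genus-$0$ surface (the double of the closed disc is $\S^2$) with the four corner points removed; an Euler characteristic count $\chi=2-4=-2=2-2g-n$ with $n=4$ forces $g=0$. Thus $\underline\Sigma$ is biholomorphic to $\C P^1$ with four distinct punctures removed, and after a M\"obius normalization the whole statement reduces to computing the cross-ratio of these punctures and showing it is harmonic, i.e.\ that the configuration is M\"obius-equivalent to $\{0,1,-1,\infty\}$.

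Next I would produce the relevant symmetries. The doubling construction carries a canonical anti-holomorphic involution $\sigma$ of $\underline\Sigma$ interchanging the two copies and fixing the gluing locus; this fixed locus is the single closed curve formed by the four edges, which passes through all four punctures. Normalizing $\sigma$ to complex conjugation places the four punctures on $\R\cup\{\infty\}$ in the cyclic order $P_1,Q_1,P_2,Q_2$. For the second symmetry, observe that the reflection $\nu\colon\S^3\to\S^3,\ (z,w)\mapsto(\ii\,\overline z,\,w)$ fixes the totally geodesic $2$-sphere $\{z\in e^{\ii\pi/4}\R\}$, preserves $\Gamma_t$ (it fixes $Q_1,Q_2$ and interchanges $P_1,P_2$, carrying edges to edges), and hence, by uniqueness of the Plateau solution, is a symmetry of $f_t$. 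Since it is a reflection of $\S^3$ across a $2$-sphere meeting the surface orthogonally, $\nu$ acts as a reflection of the minimal surface across the fixed geodesic, hence anti-holomorphically on the induced Riemann surface structure; it therefore induces an anti-holomorphic automorphism $\widehat\nu$ of $\underline\Sigma$ that commutes with $\sigma$, fixes $Q_1,Q_2$, and interchanges $P_1,P_2$.

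The crux is then to combine the two involutions. As $\sigma$ and $\widehat\nu$ are commuting anti-holomorphic involutions, $\tau:=\sigma\circ\widehat\nu$ is a holomorphic involution of $\C P^1$ fixing $Q_1,Q_2$ and interchanging $P_1,P_2$. Sending $Q_1\mapsto0$ and $Q_2\mapsto\infty$, the only non-trivial holomorphic involution with these fixed points is $z\mapsto-z$, so the interchanged punctures take the form $P_1=p$, $P_2=-p$ for some finite $p\neq0$. Rescaling by $z\mapsto z/p$ carries the four punctures to $\{0,\infty,1,-1\}=\{0,1,-1,\infty\}$, which proves the claim. I expect the only genuine obstacle to be the middle step: verifying rigorously that the $\S^3$-symmetry $\nu$ descends to a well-defined automorphism of the doubled surface with the asserted action on the punctures. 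This rests on the uniqueness of the minimal disc with boundary $\Gamma_t$ and on correctly tracking orientations, so that $\widehat\nu$ is seen to be anti-holomorphic while $\tau$ is holomorphic; once this is in place, the harmonicity of the cross-ratio, and hence the asserted biholomorphism type, is forced.
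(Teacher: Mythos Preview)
Your argument has a genuine gap at the outset: you assert that doubling $\mathring{\mathbb D}$ yields $\C P^1$ with four punctures, but the Euler-characteristic count only pins down the \emph{topological} type. One must show that each of the four ends of $\underline\Sigma$ is conformally a punctured disc rather than an annular end, i.e., that the complex structure induced by the minimal immersion extends through the corner points. Your parenthetical ``the double of the closed disc is $\S^2$'' presupposes exactly this extension. The paper devotes most of its proof of this proposition to that point: at $Q_1,Q_2$ (angle $\tfrac{\pi}{2}$) it invokes Lawson's Theorem~1; at $P_1,P_2$ with rational $t$ an analogous reflection argument works; but for irrational $t\in(0,\tfrac14]$ the paper must appeal to the real-analytic dependence of the Plateau solution on $t$ (Theorem~\ref{realanalyticity}) to pass from rational to irrational angles. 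This last step is not routine and is in fact the reason Theorem~\ref{realanalyticity} is established beforehand.

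Once the extension is granted, your cross-ratio argument is correct and more self-contained than the paper's, which simply refers to \cite[Section~3.3]{He1}. Your choice of the reflection $\nu(z,w)=(\ii\,\overline z,w)$, the verification that it preserves $\Gamma_t$ while swapping $P_1\leftrightarrow P_2$ and fixing $Q_1,Q_2$, and the composition with the doubling involution $\sigma$ to produce a holomorphic involution of $\C P^1$ with prescribed fixed points, all go through; the orientation issue you flag is real but manageable (note that $\nu$ reverses the cyclic order of the vertices of $\Gamma_t$, hence the orientation of the disc). However, that is not the ``only genuine obstacle'': the extension of the Riemann surface structure through the vertices is the substantive missing step, and for irrational $t$ it requires input you have not supplied.
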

 \begin{proof}
By construction $\underline\Sigma$ has the topology of sphere
 \[\C P^1=\underline{\Sigma}\cup\{Q_1, Q_2, P_1, P_2\}\]
 with 4 points removed. The aim is to show that the Riemann surface structure extends through the punctures  for all $t \in (0, \tfrac{1}{4}]$. Since the angle at the vertices $Q_1$ and $Q_2$ is $\tfrac{\pi}{2}$, the Riemann surface structure around $Q_1$ and $Q_2$ is  by Lawson \cite[Theorem 1]{Lawson} the Riemann surface structure obtained 
 at the quotient of a
 non-trivial holomorphic $\Z_2$-action.

At $P_1$ and $P_2$  with rational angles $t $,  the argument works analogously to  the proof of \cite[Theorem 1]{Lawson}, showing that the Riemann surface   extends to $P_1$ and $P_2$.

Together this yields for rational $t$ a well-defined compact, generally branched, minimal surface $f_t \colon M_t \rightarrow S^3$ with induced Riemann surface structure on $M_t.$ The Riemann surface $\underline\Sigma$ is then realized as a quotient of $M_t$. 

Due to the real analytic dependence of the Plateau solution $f_t$ on the angle $t$, this properties extends to 
 irrational angles $t\in(0, \tfrac{1}{4}]\setminus \mathbb Q.$ 
 
 It remains to show that the conformal type of $\underline \Sigma$, i.e, the cross ratio of the four punctures  is $-1$. This follows by using the additional symmetries of the Plateau solution inherited from the symmetries of its geodesic boundary
 $\Gamma_t,$ see \cite[Section 3.3]{He1}.
 \end{proof}
 
 Instead of $\underline \Sigma$ we will be  working on a 2-fold covering  $ \pi\colon \Sigma\to\underline\Sigma$
 branched over $0,\infty$ which removes the conical singularities of the induced metric at $Q_1$ and $Q_2.$ For $t = \tfrac{1}{2(g+1)}$ this gives one handle of the whole genus $g$ minimal surface $\xi_{1,g}$ (given by the compact surface modulo $\Z_{g+1}$ symmetry). The surface $\Sigma$ is a 2-sphere with $4$ marked/singular points 
$p_1,p_2$ and $p_3,p_4$  given by the preimages of $P_1$ and $P_2$ under $\pi,$ respectively. 
 \begin{remark}\label{Moebiuschoice}
By Proposition \ref{pro:cftsigma} we can fix the four marked/singular points of $\Sigma$ up to a M\"obius transformation to be
 either
 \begin{equation}\label{uspunctures}
 z_1= -1, \quad z_2= 1, \quad z_3= 0, \quad  z_4 = \infty
 \end{equation}
 
or

 \begin{equation}\label{uspuncturesp}
 p_1= e^{i \pi/4}, \quad p_2= - e^{i \pi/4}, \quad p_3= e^{i \pi3/4}, \quad  p_4 = -e^{i \pi3/4}.
 \end{equation}
We will make use of both normalizations of these marked points, as this facilitates many computations. The ordering of the $z_k$ and $p_k$ is such that the cross-ratio satisfies $$\text{Xratio}(z_1, ..., z_4) = \text{Xratio}(p_1, ..., p_4)  = -1.$$
 \end{remark}
 
 \section{Fuchsian Systems and parabolic structures}\label{Fuchsian}
 
 \subsection{Logarithmic connections on Riemann surfaces}
 Let $M$ be a compact Riemann surface.
 Let ${\mathbf D}\,=\, p_1+\ldots +p_n$ be a divisor, such that 
the points $p _k \in\, M$ are 
pairwise distinct. 
Let $V\to M$ be a holomorphic vector bundle with underlying holomorphic structure ${\bar \partial}_V$ and let $\mathcal V$ denote the sheaf of its holomorphic sections, i.e., for every open set $U\subset M$
we have
\[H^0(U,\mathcal V)=\{s\in\Gamma(U,V)\mid {\bar \partial}_Vs=0\}.\]

In the following, we always assume $V$ to be a  $\rm{SL}(2, \C)$--bundle, i.e., $V$ is of rank 2 and its determinant line bundle $\Lambda^2V$ is holomorphically trivial.
A \textit{logarithmic} $\rm{SL}(2, \C)$--connection 
$\nabla\,=\,\bar{\partial}_V+\partial^\nabla$ on $V$ with singular part contained in the 
divisor $\mathbf D$ is a holomorphic differential operator
\begin{equation}\label{partialnabla}
\partial^\nabla\, :\, \mathcal V\, \longrightarrow\, \mathcal V\otimes K_M\otimes {\mathcal O}_M({\mathbf D})
\end{equation}
such that
\begin{itemize}
\item the Leibniz rule $\partial^\nabla(fs)\,=\, f\partial^\nabla(s)+ s\otimes 
\partial{f}$ holds for all $s \in \mathcal V$ and $f \in {\mathcal O}_M$, and
\item the induced holomorphic connection on $\Lambda^2 V\,=\, {\mathcal O}_M$ 
is trivial.
\end{itemize}

Note that the connection $\nabla$ 
is automatically flat over $M\setminus\text{supp}(\mathbf D)$.
Conversely, every connection $\nabla$ on $V\rightarrow M\setminus\text{supp}(\mathbf D)$ with
\[\bar\partial^\nabla={\bar \partial}_V\]
such that its connection 1-form $\omega$ with respect to a (local) holomorphic frame of  $V\to U\subset M$
is meromorphic with at most first order poles at $U\cap \text{supp}(\mathbf D)$ is a logarithmic connection. Moreover, the associated residue 
 \[\text{Res}_{p_j}(\nabla)\,\in\,\text{End}(V_{p_j})\] is tracefree at every point $p _j$
in the singular divisor ${\mathbf D}$. Let
$\pm\rho_{k}$ denote the eigenvalues of $\text{Res}_{p_k}(\nabla)$ which are also called the weights of the logarithmic connection. 
The
logarithmic connection $\nabla$ is called {\em non-resonant} if $2\rho_{k}\,\notin\, \mathbb Z$
for all $j=1,\cdots, n$. 
In this case
its local monodromy around $p_k$ is 
conjugate to the diagonal matrix with entries $\exp(\pm 2\pi i\rho_{k})$.

\subsection{Parabolic structures}
A {\it parabolic structure } $\mathcal P$ on a holomorphic $\rm{SL}(2, \C)$--bundle $V$
over the divisor ${\mathbf D}$ is defined by a collection of complex  lines $L_k\, \subset\, V_{p_k}$
together with parabolic weights $\rho_k\, \in\, (0,\, \tfrac{1}{2})$ of $L_k$ for all
$k=1,\cdots, n$.
The corresponding divisor ${\mathbf D}$ is called the parabolic
divisor and $\{L_k\}_{k=1}^n$ are called the quasiparabolic lines.
The parabolic degree of  a holomorphic line subbundle $W\, \subset\, V$  is defined to be
\[\text{par-deg}(W)\,:=\, {\rm deg}(W)+\sum_{k=1}^n \rho^W_k\, ,\]
where $\rho^W_k\,= \, \rho_k$ if $W_{p_k}\,=\,L_k$, and $
\rho^W_k\,=\,
-\rho_k$ if $W_{p_k}\,\neq\, L_k$. The parabolic degree of $V$ is always 0 in our setup.

\begin{remark}
There are different possible definitions for parabolic structures, compare \cite{MS,Sim0,KW,Men}. 
There are also different conventions for the range of the weights.
We use the `trace-free' convention, see \cite{Pir,HH_abel}.
\end{remark}

\begin{definition}[\cite{MS},\cite{Pir}]
A parabolic structure $\mathcal P$
on the $\rm{SL}(2, \C)$--bundle $V$ is called {\it stable} (respectively, {\it semistable}) if
$\text{par-deg}(W)\, <\, 0$ (respectively, $\text{par-deg}(W)\, \leq\, 0$)
for every holomorphic line subbundle $W\,\subset \, V$. A semistable parabolic bundle that is not stable is called \textit{strictly semistable}.
A parabolic bundle which is not semistable is called \textit{unstable}.
\end{definition}

Every non-resonant logarithmic $\rm{SL}(2, \C)$--connection $\nabla$ on a holomorphic bundle $V$ such that the eigenvalues of all residues are contained in the interval
$(-\tfrac{1}{2},\, \tfrac{1}{2})$ naturally induces a parabolic 
structure $\mathcal P$ on $V$. The parabolic divisor of $\mathcal P$ is
hereby the singular locus $\mathbf D\,=\, p_1 
+\ldots + p_n$ of $\nabla$, the parabolic weight
at $p _k$ is the positive eigenvalue $\rho_k$ of $\text{Res}_{p_k}(\nabla)$ and
the quasiparabolic line at $p_k$ is the eigenline for $\text{Res}_{p_k}(\nabla)$ with respect to the eigenvalue $\rho_k$.
 
A {\it strongly parabolic Higgs field} on a parabolic
$\rm{SL}(2, \C)$--bundle $(V,\,{\mathcal P})$ is a meromorphic endomorphism-valued 1-form 
$$
\Psi \, \in\, H^0(M,\, \text{End}(V)\otimes K_{M}\otimes {\mathcal O}_{M}({\mathbf D}))
$$
such that $\text{tr}(\Psi)\,=\,0$  and
such that the residues $\text{Res}_{p_k}\Psi$ are nilpotent with kernels
given by the quasiparabolic lines
$$L _k\, \subset\, {\rm kernel}(\text{Res}_{p_k}\Psi)$$ for all $k = 1,..., n$. 
Note that $\Psi$ has at most first order poles at the singular points $p_k$.

Two  logarithmic $\rm{SL}(2, \C)$--connections $\nabla_1$ and $\nabla_2$
on $V$ with singular part contained in ${\mathbf D}\,=\, p_1+\ldots +p_n$ induce the same parabolic structure on $V$ if and only if
$\nabla_1-\nabla_2$ is a strongly parabolic Higgs field for the parabolic
structure induced by $\nabla_1$ (or equivalently, for the parabolic
structure induced by $\nabla_2$).

A general result of Mehta and Seshadri \cite[p.~226, Theorem 4.1(2)]{MS}, and Biquard 
\cite[p.~246, Th\'eor\`eme 2.5]{Biq} (see also \cite[Theorem 3.2.2]{Pir}) implies that the 
above construction of associating a parabolic bundle to a logarithmic connection
actually gives to a bijection between the space 
of isomorphism classes of irreducible flat ${\rm SU}(2)$--connections on $M \setminus 
{\mathbf D}$ and the stable parabolic $\rm{SL}(2, \C)$--bundles on $(M,\, {\mathbf D})$. 
As a 
consequence, every logarithmic connection $\nabla$ on $V$ giving rise to a stable parabolic 
$\rm{SL}(2, \C)$--structure $\mathcal P$ admits a unique strongly parabolic Higgs field
$\Psi$ on $(V,\, {\mathcal P})$ such that the monodromy representation of  $\nabla+\Psi$ is unitary.

In this context, reducible unitary logarithmic connections induce strictly semi-stable parabolic structures, as a parallel subbundle
$W$ has automatically parabolic degree 0. If the underlying parabolic structure of a logarithmic connection $\nabla$ is unstable
then $\nabla$ is not unitarizable.

 \subsection{Fuchsian systems on the 4-punctured sphere}
A particular class of logarithmic connections is provided by Fuchsian systems. Let $p_1,\dots,p_4\subset \C \subset \C P^1$. A $\SL(2, \C)$ Fuchsian system on the 4-punctured sphere 
is a holomorphic connection on the trivial $\C^2$-bundle over $\C P^1\setminus\{p_1,\dots,p_4\}$ of the form $\nabla= d+ \xi$ with 

\begin{equation}\label{no_residue_at_infty}
\xi = \sum_{k=1}^4 A_k \frac{dz}{z-p_k} \quad \text{such that} \quad \sum_{k=1}^4 A_k=0,
\end{equation}
to avoid a further singularity at $z =\infty.$ If $p_4 = \infty$ then $$\xi = \sum_{k=1}^3 A_k \frac{dz}{z-p_k} \quad \text{such that} \quad A_4 = -\sum_{k=1}^3 A_k.$$
As before, we assume that the residues $A_k \in \mathfrak {sl}(2, \C)$ 
have eigenvalues $\pm \rho_k$ with $\rho_k\in(0,\tfrac{1}{2}),$
i.e., $\nabla$ is non-resonant and
 the conjugacy class of the local monodromy along a simple closed  curve around a puncture is determined by $A_k$ and lies in  the conjugacy class of
 \[\begin{pmatrix}e^{2\pi i \rho_k}&0\\0&e^{-2\pi i \rho_k}\end{pmatrix}.\]
\begin{remark}\label{extension}
 In particular, there is a local holomorphic frame of $V$ such that the connection 1-form of the logarithmic connection is diagonal,
 see \cite{De}.\end{remark}

Two Fuchsian systems are equivalent (when fixing the punctures $p_k$), if there exist an invertible matrix $G$ such that \[\tilde A_k = G^{-1}A_k G\] for $k=1,\dots,4.$ Note that two Fuchsian systems with the same {weights
are equivalent if and only if the connections are gauge equivalent.

 \begin{definition}
 A \SL$(2, \C)$-Fuchsian system $\nabla$ is called reducible, if there exist a $\nabla$-invariant holomorphic line subbundle.  Otherwise, the Fuchsian system is called irreducible.
 \end{definition}
\begin{remark}
In our situation of the 4-punctured sphere we have: if $0<\sum_{k=1}^4\rho_k<1,$ then
being reducible is equivalent to the residues $A_k$, $k=1,...,4$ being simultaneously diagonalizable. Given two irreducible and equivalent Fuchsian systems $\nabla$ and $\wt \nabla$, the SL$(2, \C)$-gauge matrix $G$ between %these 
them is uniquely determined up to sign. 
\end{remark}
Fuchsian systems admitting a unitary monodromy representation are of particular interest for the construction of minimal surfaces in the 3-sphere.   
 \begin{definition}
 A \SL$(2, \C)$-Fuchsian system is called unitarizable if there exist a hermitian metric $h$  on $\underline{\C}^2\to \Sigma$ such that the connection $d+\xi$ is unitary with respect to $h.$
 \end{definition}

\subsection*{Convention:}In the following we will only consider $\mathrm{SL}(2,\C)$ Fuchsian systems on the 4-punctured sphere $\Sigma$
such that all eigenvalues are $\pm\rho$ with $\rho\in(0,\tfrac{1}{2})$. We call them Fuchsian systems with eigenvalue/weight  $\rho$ for short. We call a Fuchsian system stable or semi-stable depending on the stability of the induced parabolic structure.

\subsection{The parabolic modulus and coordinates}\label{sec:modulus}
Before defining coordinates for the moduli space of Fuchsian systems, we first collect and prove some folklore facts about these. Though we believe these facts to be well-known to experts and can easily be deduced from \cite{LS}, we include the proofs here to make the paper more self-contained. In this section we will normalize the punctures $z_1, ..., z_4$ of $\Sigma$ as in \eqref{uspunctures},
i.e., $z_1=-1,z_2=0,z_3=1,z_4=\infty.$

\begin{lemma}\label{lemmastable}
Let $\nabla$ be a \SL$(2,\C)$-Fuchsian system on the 4-punctured sphere  with parabolic weights $\pm\rho$ and $\rho \in (0, \tfrac{1}{4})$. Then the induced parabolic structure is semi-stable.  Moreover, the induced parabolic structure is strictly semi-stable if and only if two of the four quasiparabolic lines coalesce.

\end{lemma}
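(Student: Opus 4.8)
The plan is to reduce everything to a degree count for holomorphic line subbundles of the underlying bundle, which for a Fuchsian system is the trivial bundle $V=\underline{\C}^2$ over $\C P^1$. First I would note that any holomorphic line subbundle $W\subset V$ satisfies $\deg(W)\le 0$: the inclusion is a nonzero element of $H^0(\C P^1,(W^*)^{\oplus 2})$, which forces $\deg(W^*)\ge 0$. I would then split the analysis according to $\deg(W)$, using the formula $\text{par-deg}(W)=\deg(W)+\sum_{k=1}^4\rho_k^W$ with each $\rho_k^W\in\{+\rho,-\rho\}$, so that $\sum_k\rho_k^W\le 4\rho$.

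For the case $\deg(W)\le -1$, the bound $\text{par-deg}(W)\le -1+4\rho$ together with the hypothesis $\rho<\tfrac14$ gives $\text{par-deg}(W)<0$. Hence subbundles of negative degree are never destabilizing and can never attain parabolic degree $0$; this is exactly where the restriction $\rho\in(0,\tfrac14)$ enters.

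It remains to treat $\deg(W)=0$. Such a subbundle is isomorphic to $\mathcal O$ and its inclusion into $\underline{\C}^2$ is given by a nonzero constant vector $v$, so $W$ is the constant line $\C v$, determined by the point $[v]\in\C P^1=\mathbb P(\C^2)$. Comparing $v$ with each quasiparabolic line $L_k$ (all fibers identified via the global trivialization) one has $\rho_k^W=\rho$ precisely when $[v]=[L_k]$, whence $\text{par-deg}(W)=(2j-4)\rho$ where $j=\#\{k:[v]=[L_k]\}$. The heart of the argument — and the step I expect to be the main obstacle — is to show $j\le 2$. This is where the Fuchsian constraint $\sum_{k=1}^4 A_k=0$ is \emph{essential}: if $v$ lay on three of the quasiparabolic lines, say $L_1,L_2,L_3$, then $A_1v=A_2v=A_3v=\rho v$ and hence $A_4v=-(A_1+A_2+A_3)v=-3\rho v$, contradicting that $A_4$ has eigenvalues $\pm\rho$ with $\rho\neq 0$ (the case $j=4$ is excluded the same way, since it forces $4\rho v=0$). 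Therefore $\text{par-deg}(W)\le 0$ for all degree-$0$ subbundles, and combined with the previous paragraph this proves semistability.

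Finally, for the characterization of strict semistability I would argue that $\text{par-deg}(W)=0$ can occur only in the degree-$0$ case with $j=2$, i.e. exactly when two of the quasiparabolic lines coincide; conversely, if $[L_a]=[L_b]$ for some $a\neq b$, the constant line through this common point realizes a subbundle of parabolic degree $0$, so the structure is semistable but not stable. The only subtle points are the eigenvalue bookkeeping used to rule out $j\ge 3$ and keeping track of the identification of the fibers $V_{p_k}$ under the global trivialization; all remaining steps are routine degree computations.
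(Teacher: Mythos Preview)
Your proof is correct and follows the same overall architecture as the paper's: bound $\deg(W)\le 0$, dispose of the case $\deg(W)\le -1$ via $4\rho<1$, and then for degree-$0$ (constant) subbundles show that at most two quasiparabolic lines can coincide with $W$. The difference lies entirely in how that last step is carried out. The paper argues geometrically: writing $\nabla$ in block form with respect to $W\oplus\widetilde W$, the second fundamental form $\beta^W$ is a meromorphic $1$-form on $\C P^1$ losing a pole at each $p_k$ where $W=L_k$; if three or more coincide, $\beta^W$ has at most one simple pole and hence vanishes, so $W$ is $\nabla$-parallel, and then the residue formula $0=\deg(W)+\sum_k\rho_k^W\ge 2\rho$ gives the contradiction. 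Your argument instead uses the Fuchsian constraint $\sum_k A_k=0$ directly: if $A_1v=A_2v=A_3v=\rho v$ then $A_4v=-3\rho v$, which is impossible since $A_4$ has spectrum $\{\pm\rho\}$. Your route is shorter and purely linear-algebraic, tailored to the Fuchsian situation on $\C P^1$; the paper's route is more conceptual and would adapt more readily to logarithmic connections on nontrivial bundles or to higher-genus settings, where the vanishing of a second fundamental form and the residue formula are the natural tools. Both are valid here.
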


\begin{proof}
We have to show that every holomorphic line subbundle $W$ of $V = \mathcal O \oplus \mathcal O$ has non-positive parabolic degree. Since $W$ is holomorphic, deg $W \leq 0$ and moreover, if deg $W< 0$ then also its parabolic degree is negative, as $4\rho<1$ by assumption. 

A holomorphic subbundle $W$ of degree 0 of $\mathcal O\oplus\mathcal O$ is {\em constant}, i.e., parallel with respect to the trivial connection $d$. Write $\nabla$ with respect to $V= W \oplus \wt W$ for a complementary holomorphic line bundle $\wt W$ of $W$ in $V.$  Then the lower-left entry $\beta^W$ of $\nabla$ is a meromorphic $1$-form with at most first order poles at the four singular points. Since $L_k$ is a eigenline of the residue of $\nabla$ at $p_k$,  we obtain Res$_{p_k}\beta^W = 0$,  whenever $W = L_k$ at $p_k.$ In this case $\beta^W$ is holomorphic at $p_k. $ 

Therefore, if more than two quasiparabolic lines coalesce, $\beta^W \equiv 0,$ as there  are no non-zero meromorphic $1$-forms with at most one pole of order 1 on $\C P^1.$ But this would imply that $W$ is a parallel line subbundle of $V.$ Then the generalized residue
formula
gives
\[0 = \deg(W)+\sum_{k=1}^4 \rho^W_k \geq 2 \rho\]
contradicting $\rho>0.$
Therefore, 
$$\text{par-deg } W \leq -2\rho + 2 \rho = 0,$$
with equality if and only if two of the quasiparabolic lines $L_1,\dots,L_4$ coalesce and $W$ coincides with these two lines.
\end{proof}

The next Lemma characterises all reducible and unitary Fuchsian systems. 

\begin{lemma}\label{reducible}
Let $\nabla$ be a reducible and unitarizable Fuchsian system on the 4-punctured sphere  with parabolic weights $\pm\rho$ and $\rho \in (0, \tfrac{1}{4})$, then up to conjugation $\nabla$ is given by 
\[d+\sigma_{-1}\begin{pmatrix} \rho&0\\0&-\rho\end{pmatrix} \frac{dz}{z+1}+\begin{pmatrix} \rho&0\\0&-\rho\end{pmatrix} \frac{dz}{z}+\sigma_1 \begin{pmatrix} \rho&0\\0&-\rho\end{pmatrix} \frac{dz}{z-1},
\] where $(\sigma_{-1},\sigma_1)\in\{(-1,-1),(-1,1),(1,-1)\}.$ 
\end{lemma}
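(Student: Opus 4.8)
The plan is to reduce the statement to a short enumeration of sign patterns, so the first task is to convert the reducibility and unitarizability hypotheses into a concrete normal form for the residues. Since $\rho<\tfrac14$ we have $0<\sum_{k=1}^4\rho_k=4\rho<1$, so I may invoke the earlier observation that, in this weight range, reducibility is equivalent to simultaneous diagonalizability of the residues; more robustly, unitarizability makes the monodromy representation completely reducible, so the reducing line splits off and the flat connection decomposes as a direct sum of two logarithmic line‑bundle connections. Either way, after conjugating by a suitable $G\in\SL(2,\C)$ I may assume every $A_k$ is diagonal, and since each has eigenvalues $\pm\rho$ I can write $A_k=\sigma_k\minimatrix{\rho&0\\0&-\rho}$ with $\sigma_k\in\{\pm1\}$.

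Next I would use up the remaining freedom. Conjugation by the rotation $R=\minimatrix{0&1\\-1&0}\in\SL(2,\C)$ interchanges the two diagonal entries and hence flips all signs $\sigma_k\mapsto-\sigma_k$ simultaneously, while diagonal conjugations leave the $A_k$ untouched; I apply the flip once to normalize the sign at $z_2=0$ to $\sigma_2=+1$, matching the stated form in which the $\tfrac{dz}{z}$-term carries no factor. The decisive constraint is then the no‑pole‑at‑infinity condition $\sum_{k=1}^4A_k=0$, i.e. $A_4=-(A_1+A_2+A_3)=-(\sigma_1+\sigma_2+\sigma_3)\minimatrix{\rho&0\\0&-\rho}$. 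For $A_4$ to have eigenvalues $\pm\rho$, as demanded by the weight‑$\rho$ convention at the fourth puncture, I need $|\sigma_1+\sigma_2+\sigma_3|=1$.

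With $\sigma_2=+1$ fixed, this excludes the pattern $(\sigma_1,\sigma_3)=(+1,+1)$ (whose sum is $3$, forcing eigenvalues $\pm3\rho$) and leaves precisely $(\sigma_1,\sigma_3)\in\{(+1,-1),(-1,+1),(-1,-1)\}$; renaming the signs at the punctures $-1$ and $1$ as $\sigma_{-1}$ and $\sigma_1$ as in the statement reproduces the claimed list $\{(-1,-1),(-1,1),(1,-1)\}$ together with the displayed connection. It remains only to note that each such diagonal connection really is unitarizable: its local monodromies $\minimatrix{e^{2\pi i\sigma_k\rho}&0\\0&e^{-2\pi i\sigma_k\rho}}$ are unitary for the standard Hermitian metric, so no admissible case is discarded. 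I expect the one genuinely delicate step to be the first: upgrading ``reducible'' to ``simultaneously diagonalizable residues''. A reducible system a priori yields only an invariant upper‑triangular subbundle, and a non‑split logarithmic extension would be reducible yet not diagonalizable. It is precisely unitarizability — forcing complete reducibility of the monodromy — together with the weight bound $4\rho<1$, which via $\deg W=-\sum_k\rho^W_k$ pins any invariant subbundle to degree $0$, that rules such extensions out. Everything after the diagonalization is a finite sign count and one determinant‑one normalization.
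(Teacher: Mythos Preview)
Your proposal is correct and follows essentially the same route as the paper. Both arguments use unitarizability to upgrade the invariant line $W$ to a parallel splitting $V=W\oplus W'$ (the paper via the orthogonal complement for the unitarizing metric, you via complete reducibility of the monodromy), combined with the weight bound $4\rho<1$ and the residue formula to force $\deg W=0$ so that the summands are constant lines and a single constant conjugation diagonalises all residues; you are simply more explicit than the paper about the final sign enumeration, deriving the three admissible patterns from $\sum_k A_k=0$ and the eigenvalue condition at $z_4$, whereas the paper compresses this into the single sentence that $\sum_k\rho_k^W=0$ forces the stated form.
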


\begin{proof}\
A reducible Fuchsian system $\nabla$ possess an invariant holomorphic line subbundle $W$.
As before the generalized residue formula then gives that 
\[0 = \deg(W)+\sum_{k=1}^4 \rho^W_k = \text{par-deg }W.\]
Since $\rho \in (0,\tfrac{1}{4}),$ this implies that the degree of $W$ is 0 and the induced parabolic weights $\rho_k^W$ on $W$ sum up to $0.$ 
Since $\nabla$ is unitarizable there exists hermitian metric $h$ for which $\nabla$ is unitary. 
Let $W^\perp$ denote the orthogonal complement of $W$ $h$, then $W^\perp$ is parallel as well. Moreover, by Remark \ref{extension}, the holomorphic structure of $W \oplus W^\perp$ extends through the punctures as $\nabla$ is non-resonant. 
Therefore, $\nabla$ is of the stated form with respect to the splitting $V= W \oplus W^\perp$.
\end{proof}
Consider on $\Sigma$ the involutions 
$$\delta(z)=-\frac{1}{z}\quad \text{ and } \quad \tau(z)=\frac{1-z}{z+1}$$  which interchanges the punctures $z_1, ..., z_4$.  We call a Fuchsian system symmetric if there exist $\tilde D,\tilde C\in\mathrm{SL}(2,\C)$ such that 
$$\delta^*\nabla=\nabla.\tilde D\quad \text{ and } \quad \tau^*\nabla=\nabla.\tilde C.$$
\begin{remark}
The symmetries $\delta,\tau$ differ from those in \cite{HHT2} only by the Moebius transformation interchanging the points $z_k$ \eqref{uspunctures} and $p_k$ \eqref{uspuncturesp}. Moreover, we will show in Lemma \ref{lemma_modulipara} that up to conjugation the matrices $\tilde C$ and $\tilde D$ can be chosen to be 
\begin{equation}\label{CD}
D = \begin{pmatrix} i & 0 \\ 0& -i\end{pmatrix} \quad \text{and} \quad C = \begin{pmatrix}0 & i \\ i &0\end{pmatrix}.
\end{equation}
\end{remark}

We can characterize symmetric and reducible Fuchsian systems and strictly semi-stable parabolic structures.
\begin{lemma}\label{reduciblesymmetric}
Let $\nabla$ be a symmetric SL$(2,\C)$-Fuchsian system on the 4-punctured sphere $\Sigma$. If $\nabla$ is strictly semi-stable, then its parabolic structure $\mathcal P$ coincides with parabolic structure of a reducible and unitary Fuchsian system.  Moreover, if $\nabla$ is reducible, then it is automatically %also 
unitarizable.
\end{lemma}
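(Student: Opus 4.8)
\emph{Part 1: the strictly semi-stable case.} The plan is to use the symmetry to pin down the four quasiparabolic lines and then invoke Lemma \ref{lemmastable}. Write $\ell_k\in\C P^1$ for the quasiparabolic line $L_k$, i.e. the $+\rho$-eigenline of $\mathrm{Res}_{z_k}\nabla$, and use the normalisation $z_1=-1,z_2=0,z_3=1,z_4=\infty$. The involutions $\delta(z)=-1/z$ and $\tau(z)=(1-z)/(z+1)$ induce the permutations $\sigma=(1\,3)(2\,4)$ and $\sigma'=(1\,4)(2\,3)$ of the punctures. Since the residue of a logarithmic connection is invariant under biholomorphic pullback, comparing residues on the two sides of $\delta^*\nabla=\nabla.\tilde D$ and $\tau^*\nabla=\nabla.\tilde C$ yields $A_{\sigma(k)}=\tilde D^{-1}A_k\tilde D$ and $A_{\sigma'(k)}=\tilde C^{-1}A_k\tilde C$, hence $\ell_{\sigma(k)}=\tilde D^{-1}\ell_k$ and $\ell_{\sigma'(k)}=\tilde C^{-1}\ell_k$ on eigenlines. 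After conjugating to the normal form \eqref{CD}, the maps induced by $D$ and $C$ on $\C P^1$ are the involutions $w\mapsto -w$ and $w\mapsto 1/w$ in the affine ratio $w$ (note $D^2=C^2=-\Id$, so inverse and direct actions agree). Setting $x:=\ell_1$, the relations then force $\ell_3=-x$, $\ell_4=1/x$ and $\ell_2=-1/x$, so the quasiparabolic configuration is $(\ell_1,\ell_2,\ell_3,\ell_4)=(x,\,-1/x,\,-x,\,1/x)$.

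By Lemma \ref{lemmastable}, $\nabla$ is strictly semi-stable exactly when two of these lines coalesce, and a direct inspection of $(x,-1/x,-x,1/x)$ shows this happens precisely for $x\in\{0,\infty\}$, for $x\in\{\pm1\}$, or for $x\in\{\pm i\}$. In each case the symmetry forces the lines to coincide in two pairs, namely $\{\ell_1,\ell_3\},\{\ell_2,\ell_4\}$, or $\{\ell_1,\ell_4\},\{\ell_2,\ell_3\}$, or $\{\ell_1,\ell_2\},\{\ell_3,\ell_4\}$ respectively. Conjugating the two distinct lines to $[1:0]$ and $[0:1]$, these three patterns are exactly the quasiparabolic configurations obtained by reading off the $+\rho$-eigenlines of the diagonal residues in Lemma \ref{reducible} for $(\sigma_{-1},\sigma_1)$ equal to $(-1,-1)$, $(-1,1)$, and $(1,-1)$, in that order. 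As the divisor and the weight $\rho$ agree as well, the parabolic structure $\mathcal P$ of $\nabla$ coincides with that of a reducible and unitary Fuchsian system, which proves the first assertion.

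\emph{Part 2: the reducible case.} Here I would not use the symmetry at all, but the folklore characterisation recorded just before the lemma, which applies because $0<\sum_k\rho_k=4\rho<1$: a reducible Fuchsian system has simultaneously diagonalisable residues. Thus after a constant gauge $G\in\SL(2,\C)$ the system takes the form $d+\mathrm{diag}(\alpha,-\alpha)$, a direct sum of two abelian connections whose residues $\pm\rho$ at each puncture are \emph{real}. Consequently the monodromy generators are the diagonal matrices $\mathrm{diag}(e^{\mp2\pi i\rho},e^{\pm2\pi i\rho})$, all lying in the diagonal torus of $\SU(2)$, so the monodromy representation is unitary. Transporting a parallel invariant hermitian form back by $G$ produces a metric $h$ for which $\nabla$ is unitary; hence $\nabla$ is unitarizable.

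\emph{Main obstacle.} The computational heart, and the step most prone to sign and convention errors, is the first paragraph: correctly transporting the symmetry relations $\delta^*\nabla=\nabla.\tilde D$ and $\tau^*\nabla=\nabla.\tilde C$ down to the induced involutions on $\C P^1$, and thereby obtaining the normal form $(x,-1/x,-x,1/x)$ for the quasiparabolic lines. Once this normal form is available, the coalescence analysis and the matching with Lemma \ref{reducible} are routine. For the second part the only delicate point is the passage from simultaneous diagonalisability to \emph{unitary} (rather than merely abelian) monodromy, which is exactly where the reality of the weight $\rho$ is used.
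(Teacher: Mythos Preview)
Your Part 1 follows a more computational route than the paper. The paper never needs the explicit form of $\tilde D,\tilde C$: it starts from Lemma~\ref{lemmastable}, picks the constant line $W$ meeting exactly the two quasiparabolic lines $L_a,L_b$, chooses the unique element $\mu\in\langle\delta,\tau\rangle$ swapping $\{a,b\}\leftrightarrow\{c,d\}$, and observes that the corresponding $M$ must move $W$ (else all four lines would coalesce), so $V=W\oplus MW$ with the stated parabolic pattern. Your approach instead fixes $\tilde D=D,\tilde C=C$ and reads off the configuration $(x,-1/x,-x,1/x)$. That normalisation is essentially the content of Lemma~\ref{lemma_modulipara}, proved later and only for \emph{irreducible} $\nabla$. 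One can rescue your argument by noting that any $\tilde D,\tilde C$ with $\tilde D^2,\tilde C^2\in\{\pm\Id\}$ induce commuting involutions on the $\C P^1$ of lines which, when both nontrivial and distinct, are simultaneously conjugate to $w\mapsto-w$ and $w\mapsto1/w$, while the degenerate cases already force $\nabla$ reducible. So Part~1 is correct modulo this extra justification.

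Part 2, however, has a genuine gap. You invoke the remark that for $0<\sum_k\rho_k<1$ reducibility is equivalent to simultaneous diagonalisability of the residues and then discard the symmetry hypothesis. That remark is imprecise as stated: a reducible Fuchsian system has a $\nabla$-invariant constant line but in general no invariant complement. Concretely, for any $\rho\in(0,\tfrac14)$ take
\[
A_1=\begin{pmatrix}\rho&1\\0&-\rho\end{pmatrix},\quad A_2=A_3=\begin{pmatrix}-\rho&0\\0&\rho\end{pmatrix},\quad A_4=\begin{pmatrix}\rho&-1\\0&-\rho\end{pmatrix}.
\]
Then $\C e_1$ is $\nabla$-invariant, yet $[A_1,A_2]\neq0$, so the residues are not simultaneously diagonalisable; by Lemma~\ref{reducible} (which needs unitarizability to reach the diagonal form) this system is reducible but \emph{not} unitarizable, and one checks it fails the $\delta$-symmetry. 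Hence the symmetry hypothesis cannot be dropped. The paper uses it via Part~1: the parabolic structure of a symmetric reducible $\nabla$ is that of one of the three diagonal connections of Lemma~\ref{reducible}, so $\nabla$ differs from that connection by a strongly parabolic Higgs field $\Psi$ which is necessarily off-diagonal in $W\oplus MW$; but any reducible connection on that parabolic bundle must be diagonal in the same splitting, forcing $\Psi=0$ and hence $\nabla$ unitary.
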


\begin{proof}
Let $\nabla$ be a Fuchsian system that induces a strictly semi-stable parabolic structure $\mathcal P.$ By Lemma \ref{lemmastable} there exists
 a constant line subbundle $W$  of $V$ with parabolic degree 0 which coincides with two of the four quasiparabolic lines. Denote the corresponding singular points by $a\neq b\in\{z_1,\dots,z_4\}$, and the remaining two singular points by $c\neq d\in\{z_1,\dots,z_4\}\setminus\{a,b\}$.

Consider the group of automorphisms generated by the involutions $\delta$ and $\tau$ (this group is isomorphic to $\Z_2\times \Z_2$).  
In this group there exist a unique element $\mu$ interchanging the two pairs $\{a,b\}$ and $\{c,d\}$ of singular points. Since $\nabla$ is symmetric, we have $\mu^*\nabla=\nabla. M$ for some $M\in\mathrm{SL}(2,\C).$ Moreover,
\[MW\neq W,\]
otherwise $W$ would coincide with all four quasiparabolic lines contradicting Lemma \ref{lemmastable}.  Hence, 
$$V= \mathcal O\oplus\mathcal O=W\oplus MW,$$ such that the quasiparabolic lines at $a,b$ are $W$ and the quasiparabolic lines
at $c,d$ are $MW.$ In particular, since the Fuchsian system is non-resonant, 
its parabolic structure
is (gauge equivalent to) the one obtained from a reducible unitary Fuchsian system stated in Lemma \ref{reducible}. Therefore, $\nabla$ differs from one of the 3 connections of Lemma \ref{reducible} by a strongly parabolic Higgs field $\Psi$ which is necessarily off-diagonal with respect to $W \oplus MW$. Since all reducible connections 
on this parabolic bundle
must be diagonal with respect to $W \oplus MW$, the strongly parabolic Higgs field $\Psi$ must be zero  implying that $\nabla$ is unitary. 

\end{proof}

Two Fuchsian systems inducing the same parabolic structure differ by a strong parabolic Higgs field. The next Lemma shows that the space of these Higgs fields is complex 1 dimensional. For strictly semi-stable parabolic structures the space is actually bigger, therefore we have to restrict to symmetric Higgs fields here.
\begin{lemma} \label{Higgsfield}
Let $\nabla$ be a stable Fuchsian system. Then the space $\mathcal H$ of strongly parabolic Higgs fields is a complex 1-dimensional vector space. If $\nabla$ is symmetric and strictly semi-stable, the space of symmetric strongly parabolic Higgs fields is complex 1-dimensional.
\end{lemma}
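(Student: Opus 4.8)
The plan is to identify $\mathcal H$ with the space of global sections of a rank $3$ bundle on $\C P^1$, to fix its Euler characteristic by Riemann--Roch, and to feed in stability only through the vanishing of the associated obstruction space; in the strictly semi-stable case that obstruction no longer vanishes, and the claimed drop in dimension is then produced by the symmetry.

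I would start from a concrete description, using the normalization $z_1=-1,z_2=0,z_3=1,z_4=\infty$. Since $V=\mathcal O\oplus\mathcal O$ and $K_{\C P^1}\otimes\mathcal O_{\C P^1}({\mathbf D})=\mathcal O(-2)\otimes\mathcal O(4)=\mathcal O(2)$, every tracefree $\mathrm{End}(V)$-valued $1$-form with at most first order poles at the four punctures has the form $\Psi=\left(\frac{N_{-1}}{z+1}+\frac{N_0}{z}+\frac{N_1}{z-1}\right)dz$ with $N_{-1},N_0,N_1\in\mathfrak{sl}(2,\C)$ and residue $N_\infty=-(N_{-1}+N_0+N_1)$ at $\infty$; this is a $9$-dimensional space. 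The strongly parabolic condition at $z_k$ asks that $\mathrm{Res}_{z_k}\Psi$ be nilpotent with $L_k$ in its kernel, and for a tracefree residue this is equivalent to $\mathrm{Res}_{z_k}\Psi\,(L_k)=0$ (nilpotency is then automatic), cutting the $3$-dimensional space of residues down to a line. The four punctures thus impose $4\cdot 2=8$ linear conditions, so $\dim_\C\mathcal H\ge 9-8=1$.

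To get equality I reinterpret the count cohomologically. Let $\mathcal{SPH}$ be the rank $3$ bundle of tracefree $\mathrm{End}(V)$-valued $1$-forms that are holomorphic off ${\mathbf D}$ and whose residue at each $z_k$ lies in the nilpotent line annihilating $L_k$, so that $\mathcal H=H^0(\C P^1,\mathcal{SPH})$. Viewing $\mathcal{SPH}$ as the enlargement of $\mathrm{End}_0(V)\otimes K_{\C P^1}$ by a one-dimensional polar part at each $z_k$ gives $\deg\mathcal{SPH}=3\,\deg K_{\C P^1}+4=-2$, hence $\chi(\mathcal{SPH})=-2+3=1$ by Riemann--Roch. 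By Serre duality $h^1(\mathcal{SPH})=h^0(K_{\C P^1}\otimes\mathcal{SPH}^\vee)$, and the trace pairing identifies $K_{\C P^1}\otimes\mathcal{SPH}^\vee$ with the bundle $\mathrm{ParEnd}_0(V)$ of tracefree endomorphisms preserving each $L_k$; the local input is that $\mathrm{tr}(\mathrm{Res}_{z_k}\Psi\cdot\phi)$ has no pole whenever $\phi$ preserves $L_k$. For a stable parabolic bundle $H^0(\mathrm{ParEnd}_0(V))=0$: a nonzero tracefree parabolic endomorphism is either semisimple, splitting $V$ as a sum of two parabolic line bundles of opposite, hence not both negative, parabolic degree, or nilpotent with kernel a line bundle $W$ admitting a nonzero parabolic map $V/W\to W$, forcing $\text{par-deg}(W)\ge 0$; either way stability is violated. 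Hence $h^1(\mathcal{SPH})=0$ and $\dim_\C\mathcal H=\chi(\mathcal{SPH})=1$, which is the first assertion.

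For the symmetric strictly semi-stable case the Euler characteristic is still $1$, but now $H^0(\mathrm{ParEnd}_0(V))$ is one-dimensional --- spanned by the endomorphism diagonal with respect to the splitting of Lemma \ref{reduciblesymmetric} --- so $\dim_\C\mathcal H=2$, and the point is that equivariance halves this. By Lemma \ref{reduciblesymmetric} I may write $V=W\oplus MW$ with the quasiparabolic lines equal to $W$ at two of the punctures and to $MW$ at the other two. Then a strongly parabolic Higgs field is off-diagonal: its diagonal part is a global holomorphic $1$-form, hence zero, its upper-right entry is a $1$-form with poles only at the two $W$-punctures, and its lower-left entry a $1$-form with poles only at the two $MW$-punctures. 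Each of these spaces of $1$-forms is one-dimensional (a section of $K_{\C P^1}(p+q)\cong\mathcal O$), recovering $\dim_\C\mathcal H=2$. The group $\langle\delta,\tau\rangle\cong\Z_2\times\Z_2$ acts on $\mathcal H$ by pullback followed by the gauges \eqref{CD}; two of its three nontrivial elements interchange the two pairs of punctures, hence (conjugating by the off-diagonal $M$) swap the upper-right and lower-left entries, while the third preserves the pairs and acts diagonally. I finish by checking that the simultaneous fixed space of this action is exactly one-dimensional.

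The stable case offers no real obstacle: its only substantial input is the vanishing $H^0(\mathrm{ParEnd}_0(V))=0$, which is literally stability. The main work lies in the symmetric strictly semi-stable case, where one must verify that the $\Z_2\times\Z_2$-action on the two-dimensional space of Higgs fields has a fixed subspace of dimension exactly one rather than zero or two; this is a sign computation that pits the equivariance relations $\delta^*\Psi=\tilde D^{-1}\Psi\tilde D$ and $\tau^*\Psi=\tilde C^{-1}\Psi\tilde C$ against the normalization \eqref{CD}, and it is where the precise choice of $C$ and $D$ matters.
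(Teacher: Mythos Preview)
Your argument is correct, but it takes a genuinely different route from the paper in the stable case. The paper argues directly: assuming two independent Higgs fields, it forms a combination with vanishing residue at $z_1$, normalises the remaining residues, and forces two quasiparabolic lines to coincide, contradicting stability; existence comes from the explicit $\Psi^u$ in \eqref{psi^u}. Your approach instead packages $\mathcal H$ as $H^0$ of a rank~$3$ sheaf, computes $\chi=1$ by Riemann--Roch, and kills $H^1$ via the Serre-dual identification with tracefree parabolic endomorphisms, which vanish by simplicity of stable parabolic bundles. Your argument is the standard cohomological one and generalises immediately (to more punctures, higher genus, higher rank); the paper's is elementary and self-contained, requiring nothing beyond the residue theorem and $2\times 2$ linear algebra.

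In the strictly semi-stable case the two approaches converge: both reduce to the off-diagonal description relative to $W\oplus MW$ and the one-dimensional spaces of $1$-forms with two simple poles. The paper, however, uses only the single involution $\mu$ swapping the two pairs of punctures (and its gauge $M$) to tie the lower-left entry to the upper-right one, giving the $1$-parameter family directly. You instead invoke the full $\Z_2\times\Z_2$-action and defer the verification that the fixed subspace is exactly one-dimensional to a sign computation you do not carry out. That computation is genuine --- one must check the fixed space is not $0$ --- and the paper's shortcut of using just $\mu$ both simplifies it and makes the nonzero symmetric Higgs field explicit.
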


\begin{proof}
First, consider a strongly parabolic Higgs field with respect to a stable parabolic structure $\mathcal P$. Assume that $\mathcal H$ is at least 2-dimensional, i.e., there exist linear independent strongly parabolic Higgs fields $\Psi_1$ and $\Psi_2$. Recall that the residues of strongly parabolic Higgs fields are nilpotent with kernel given by the respective
quasiparabolic line. 
Therefore, we can take a non-zero linear combination of $\Psi_1$ and $\Psi_2$ such that such that  the residue at $z_1$ vanishes. Up to conjugation and scaling, the residues at
$z_2,$ $z_3$ and $z_4$ are 
\[\begin{pmatrix}0&1\\0&0\end{pmatrix},  \quad \begin{pmatrix}x&y\\z&-x\end{pmatrix}, \quad \text{and} \quad \begin{pmatrix}-x&-y-1\\-z&x\end{pmatrix} , \quad \text{respectively.} \]
As the residues at $z_3$ and at $z_4$ are both nilpotent, we get $-x^2-yz=0$ and $-x^2-(y+1)z=0$ which implies $z=0$ and $x=0$. This gives that the kernel of the residue at $z_3$ and $z_3$ is the same, which contradicts the fact that the quasiparabolic lines are pairwise distinct for a stable parabolic structure. A non-trivial parabolic Higgs field for every stable parabolic structure is given in \eqref{psi^u} below.

By Lemma \ref{reduciblesymmetric} the parabolic structure of a symmetric, and strictly semi-stable Fuchsian system $\nabla$ is the one of a reducible unitary Fuchsian system, i.e, by Lemma \ref{reducible} it is diagonal with respect to the splitting $V= W\oplus MW$ with  off-diagonal strongly parabolic Higgs field. 
Moreover, the upper right entry  is $x (\frac{dz}{z-a}-\frac{dz}{z-b})$  for some $x\in\C$ by residue theorem and the lower left entry is
$x\mu^*(\frac{dz}{z-a}-\frac{dz}{z-b})$ by symmetry.
\end{proof}

By Lemma \ref{lemmastable}
 a stable Fuchsian system $\nabla$ induces a stable parabolic structure $\mathcal P$ with four pairwise distinct quasiparabolic lines, i.e., 
the four eigenlines with respect to the positive eigenvalues of the four residues are pairwise distinct lines in $\C^2.$ Hence, their cross-ratio, denoted by $u$, is well-defined after some choices. Without loss of generality we can conjugate $\nabla$ by a suitable $g\in\mathrm{SL}(2,\C)$ such that
the eigenlines with respect to the positive weight
 at $z_2=0$, $z_3=1$ and $z_4=\infty$
are
\[\C\begin{pmatrix}0\\1\end{pmatrix}, \quad \quad \C\begin{pmatrix}1\\1\end{pmatrix},  \quad \text{and }
\quad \C\begin{pmatrix}1\\0\end{pmatrix}, \quad \text{respectively}.\]
Then, there exists a unique 
\[u:=u(\nabla)\in \C P^1\setminus\{0,1,\infty\}\] such that the eigenline of  the residue of $\nabla$ at $z_1=-1$ is
\[\C\begin{pmatrix}u\\1\end{pmatrix}.\]

Note that the gauge $g$ is unique up to sign, and hence we have fixed the gauge freedom.
In particular, if $\nabla$ and $\widehat\nabla$ have different $u(\nabla)\neq u(\widehat\nabla)$ then
$\nabla$ and $\widehat\nabla$ cannot be gauge-equivalent. This gives rise to a well-defined holomorphic map 
\begin{equation}\label{u1}u\colon \{\text{Fuchsian systems with stable parabolic structure}\}\to\C P^1,
\end{equation}
which we refer to as {\em modulus map}. This map is invariant under conjugation (or gauge transformation) by construction.
In particular, a holomorphic family of Fuchsian systems $\lambda\in U\subset\C\mapsto \xi(\lambda)$ 
with stable underlying parabolic structures
gives rise to a holomorphic function $u\colon U\to\C.$
It is not difficult to prove (using Riemann's theorem about removable singularities) that
$u$ extends  to the space of all Fuchsian systems. More precisely, every holomorphic
family of Fuchsian systems $\lambda\in U\subset\C\mapsto \xi(\lambda)$ 
gives rise to a holomorphic modulus function $u\colon U\to\C.$
In particular,  the images of the reducible Fuchsian systems 
of Lemma \ref{reducible} 
(parametrised by $(\sigma_{-1},\sigma_1)\in \{(-1,-1),(-1,1),(1,-1)\}$)  under $u$
are given by
\[(-1,-1)\mapsto_u 1;\quad (-1,1)\mapsto_u \infty; \quad  (1,-1)\mapsto_u 0.\]

Now we can introduce global coordinates  
of the moduli space of stable, trace-free Fuchsian systems on a $4$-punctured sphere.  To our best knowledge these have been first  been introduced by Loray-Saito \cite{LS}, see also \cite{HH_abel}. Let $\rho \in (0, \tfrac{1}{2})$.
For $u\in \C\setminus\{0,1\}$ set
\begin{equation}\label{concrete_parabolic_structure}
\begin{split}
A^u_1=\begin{pmatrix}-\rho  &2\rho u \\ 0 & \rho\end{pmatrix}, \quad\,A^u_2=\begin{pmatrix}-\rho  &0 \\ -2\rho & \rho\end{pmatrix}, \quad 
A^u_3&=\begin{pmatrix}\rho &0  \\ 2\rho & - \rho\end{pmatrix}, \quad
A^u_4=\begin{pmatrix}\rho &-2\rho u \\ 0 & -\rho\end{pmatrix}.
\end{split}
\end{equation}

Then the connection 

\[\nabla^u:=d+\sum_{k=1}^3 A_k^u \frac{dz}{z-z_k}\] is a Fuchsian system with poles at $z_k$ for $k= 1, ..., 4$ and parabolic weights $\pm\rho$ and modulus $u.$   Moreover, for
\begin{equation}\label{concrete_Higgs}
\begin{split}
\Psi_1=\begin{pmatrix}-u &u^2 \\ -1 & u\end{pmatrix}, \quad\Psi_2=\begin{pmatrix}0 &0 \\ 1-u & 0\end{pmatrix},\quad
\Psi_3=\begin{pmatrix}u&-u \\ u & -u\end{pmatrix},\quad
\Psi_4=\begin{pmatrix}0 &u-u^2 \\ 0 & 0\end{pmatrix},
\end{split}
\end{equation}
the $1$-form
\begin{equation}\label{psi^u}
\Psi^u=\Psi:=\sum_{k=1}^4 \Psi_k \frac{dz}{z-p_k}
\end{equation}
 is a strongly parabolic Higgs field with respect to the modulus $u.$  By Lemma \ref{Higgsfield} the Higgsfield $\Psi^u$ is unique up to scaling for $u \in \C \setminus\{0,1\}$. Therefore, we can write every stable Fuchsian system up to a unique gauge as 
\begin{equation}\label{nablaus}\nabla^{u,s}:=\nabla^u+ s \Psi, \quad s\in \C\end{equation}
for some unique $s\in\C$.  This gives us a global coordinate system $(u,s)$ on the moduli space of stable Fuchsian systems. We remark that $u=-1$ is special in the sense that it is the unique {\em stable} parabolic structure
which admits a nilpotent strongly parabolic Higgs field, as 
\[\text{det}(\Psi^u)=-\frac{u-u^3}{z-z^3}.\]

The other exceptional cases $u = 0, 1, \infty$ correspond to strictly semi-stable parabolic structures which are not symmetric with respect to the involutions $\delta$ and $\tau$, since they have three distinct quasiparabolic lines.

\subsection{Exceptional logarithmic connections}
Every representation of the first fundamental group of the 4-punctured sphere
with prescribed local monodromies is induced by a logarithmic connection. But not all of the logarithmic connections are Fuchsian, i.e., they are not always defined on the trivial holomorphic bundle $\mathcal O\oplus\mathcal O.$

By Grothendieck splitting the underlying holomorphic vector bundle $V$ of a logarithmic connection over $\C P^1$ is given by $V= \mathcal O(l) \oplus \mathcal O(-l)$ for $l \in \N$. On the $4$-punctured sphere we have $l \leq 1$ due to degree considerations. In fact, for $l>1$, the second fundamental form of $\mathcal O(l)$ necessarily vanishes, i.e., $\mathcal O(l)$ would be parallel, 
contradicting the generalized residue formula with $\rho \in (0, \tfrac{1}{4})$. We have already studied the case of $V= \mathcal O \oplus \mathcal O$, where the parabolic structure is always semi-stable, and the complex dimension is $2$ of the moduli space of Fuchsian systems with prescribed weights is 2. In contrast logarithmic connections on $V= \mathcal O(-1) \oplus \mathcal =(1)$ turn out to be unstable and the moduli space is complex 1-dimensional.

\begin{proposition}\label{up1extension}
Let $\rho\in(0,\tfrac{1}{4})$ 
and $V= \mathcal O(-1) \oplus \mathcal O(1)\rightarrow \Sigma$. Then, the moduli space of trace free logarithmic connections 
on $V$ with four singular points with parabolic weight $\rho\in(0, \tfrac{1}{4})$ is a complex line. 
The underlying parabolic structure is unstable.

Moreover, the parabolic modulus $u$ as a holomorphic function of the moduli space of logarithmic connections
to $\mathbb C P^1$ 
extends holomorphically to this complex line, mapping the whole line to $u=-1.$ 
\end{proposition}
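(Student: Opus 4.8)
The plan is to reduce all three claims to two structural facts: the canonical destabilizing subbundle $\mathcal O(1)\subset V$, and the nilpotency of the Higgs field $\Psi^u$ at $u=-1$. First I would observe that $\mathcal O(1)$ is the unique holomorphic line subbundle of $V$ of degree $1$, since $\text{Hom}(\mathcal O(1),V)=H^0(\mathcal O)\oplus H^0(\mathcal O(-2))=\C$; in particular it is preserved by every holomorphic automorphism of $V$. Its parabolic degree satisfies $\text{par-deg}(\mathcal O(1))=\deg\mathcal O(1)+\sum_{k=1}^4\rho_k^{\mathcal O(1)}\geq 1-4\rho>0$ because $\rho<\tfrac14$, so the induced parabolic structure is unstable; this settles the second assertion. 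The same inequality shows that the second fundamental form $\beta$ of $\mathcal O(1)$ cannot vanish, for otherwise $\mathcal O(1)$ would be $\nabla$-parallel and the generalized residue formula would force $\text{par-deg}(\mathcal O(1))=0$.

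To obtain that the moduli space is a complex line I would work in the block decomposition of $\nabla$ relative to $V=\mathcal O(-1)\oplus\mathcal O(1)$ and count dimensions. The trace-free logarithmic connections on the fixed bundle $V$ with poles on $\mathbf D$ form an affine space modelled on $H^0(\text{End}_0 V\otimes K\otimes\mathcal O(\mathbf D))$; from $\text{End}_0 V=\mathcal O\oplus\mathcal O(2)\oplus\mathcal O(-2)$ and $K\otimes\mathcal O(\mathbf D)=\mathcal O(2)$ this space has dimension $h^0(\mathcal O(2))+h^0(\mathcal O(4))+h^0(\mathcal O)=3+5+1=9$. Requiring each of the four residues to have eigenvalues $\pm\rho$ is one determinant condition per puncture, and the group of $\SL$-automorphisms $\{\minimatrix{a&0\\h&a^{-1}}:a\in\C^*,\ h\in H^0(\mathcal O(2))\}$ has dimension $4$; a transversality computation then yields $9-4-4=1$. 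Concretely, normalizing $\beta$ to a fixed nonzero constant by the $\C^*$-part of the gauge group and absorbing the unipotent part leaves exactly one free coefficient, exhibiting the moduli space as a complex line.

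For the modulus I would use that $\Psi^u$ is nilpotent exactly at $u=-1$, the numerator $u-u^3$ of $\det(\Psi^u)$ vanishing there. Since the residue $\Psi_k$ of a strongly parabolic Higgs field is nilpotent with kernel the quasiparabolic line $L_k$, which is also the $(+\rho)$-eigenline of $\text{Res}_{z_k}\nabla^u$, the addition of $s\Psi$ fixes every $L_k$; hence the whole family $\nabla^{-1,s}$, $s\in\C$, has constant modulus $u\equiv-1$. I would then realize the exceptional connections as limits of stable Fuchsian systems $\nabla^{u_n,s_n}$ with $u_n\to-1$ and $s_n\to\infty$, the $s$-dependent bundle-changing gauge transformation that rescales the nilpotent direction degenerating $\mathcal O\oplus\mathcal O$ to $\mathcal O(-1)\oplus\mathcal O(1)$ in the limit. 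Along such a family $u$ is holomorphic away from the limit point and stays bounded, the four eigenlines varying holomorphically and remaining distinct, so by Riemann's removable singularity theorem $u$ extends holomorphically across the exceptional locus with value $\lim u_n=-1$; as these limits accumulate on the (irreducible) exceptional line, the identity theorem forces $u\equiv-1$ throughout.

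The hardest step is this degeneration: one must control the bundle-changing gauge $g_s$ precisely enough that $g_s\cdot\nabla^{u_n,s_n}\cdot g_s^{-1}$ converges to a genuine logarithmic connection on $\mathcal O(-1)\oplus\mathcal O(1)$, and check that varying the rate at which $(u_n,s_n)\to(-1,\infty)$ produces limits accumulating on the exceptional line so that the identity theorem applies. A cleaner alternative that sidesteps the limit would be to compute the cross-ratio of the four residue-eigenlines directly in a rational trivialization of $V$ using the explicit connection from the dimension-count step, where the nontriviality of the transition at $\infty$ is what one expects to force the harmonic value $u=-1$. In either route one still has to verify the transversality of the four determinant conditions and the freeness of the $\text{Aut}(V)$-action modulo $\pm\Id$ underlying the count.
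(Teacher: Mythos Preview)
Your argument for instability is identical to the paper's, and your block decomposition with the observation that the second fundamental form $\beta$ (the paper's $\alpha$, since the paper writes the splitting in the opposite order) is nonzero and essentially unique is also the starting point of the paper's proof.

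For the dimension of the moduli space the routes diverge. You do a parameter count $9-4-4=1$ and defer the transversality and freeness checks. The paper instead carries out the gauge-fixing concretely: it uses the unipotent part of $\mathrm{Aut}(V)$ to force the residues of the diagonal part to equal $\rho$ at $z_1,z_2,z_3$, then the eigenvalue condition forces the lower-left entry to be holomorphic at $z_1,z_2,z_3$ with a pole at $z_4$, and the $\C^*$-part normalizes it to $(c_0+z)\,dz$. This produces an explicit connection $1$-form \eqref{connection1formU0} depending on a single parameter $c_0\in\C$, so the affine-line structure is visible without any transversality argument.

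For the extension of $u$ your limit idea is exactly right, and the paper's proof is the explicit implementation of it. Rather than sending $s\to\infty$ abstractly, the paper deforms the bundle: it defines $V(E)$ via the cocycle $h^E=\minimatrix{z&-E\\0&z^{-1}}$, which is holomorphically trivial for $E\neq 0$ and equals $\mathcal O(-1)\oplus\mathcal O(1)$ at $E=0$, writes down a logarithmic connection $\nabla^{E,c_0}$ on $V(E)$ reducing to the normal form above at $E=0$, and exhibits an explicit gauge $l^E C$ taking $\nabla^{E,c_0}$ to $\nabla^{u_0,s_0}$ with
\[
u_0=-\frac{(c_0+1)E+1-2\rho}{(c_0-1)E+1-2\rho},\qquad (u_0+1)s_0=1-4\rho+(c_0-1)E.
\]
This makes the holomorphic extension of $u$ and the value $u=-1$ on $\{E=0\}$ immediate, and simultaneously shows that the $(E,c_0)$-family hits every point of the exceptional line, replacing your identity-theorem step. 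The second formula is not incidental: it is precisely what is used in Step~1 of Theorem~\ref{thm:existence} to rule out unstable bundles for $t<\tfrac14$, so the paper's constructive route buys strictly more than the bare statement of the proposition. Your abstract count and limit argument would establish the proposition but would not directly yield this formula.
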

\begin{remark}
The moduli space of parabolic structures admitting logarithmic connections is therefore not Hausdorff, but has a double
point over $u=-1.$
\end{remark}
\begin{proof}
The parabolic structure is unstable, as the line bundle $\mathcal O(1)$ is a holomorphic subbundle of $V$ with positive parabolic degree on the $4$-puncture sphere as $\rho \in (0, \tfrac{1}{4}).$

To determine the moduli space of logarithmic connections on $V$, we decompose a  trace-free logarithmic connection $\nabla$ with respect to  $V= \mathcal O(-1) \oplus \mathcal O(1)$
as
\[\nabla=\begin{pmatrix} \nabla^{-1}&\alpha\\\beta&\nabla^{1}\end{pmatrix}\]
where $\nabla^1$ and  $\nabla^{-1}$ are the induced logarithmic connections on $\mathcal O(1)$ and $\mathcal O(-1)$, respectively, 
$\alpha$ is a meromorphic $\mathcal O(-2)$-valued 1-form with at most 4 simple poles and $\beta$ is a meromorphic $\mathcal O(2)$-valued 1-form with at most 4 simple poles.
Hence $\alpha$ has exactly 4 poles, i.e., up to scaling it is the unique  meromorphic  section  of $\mathcal O(-4)=\mathcal O(-2)\otimes K_{\mathbb CP^1}$ with simple poles at $z_1,\dots,z_4.$ Note that
$\alpha$ is non-zero since otherwise $\mathcal O(1)$ would be parallel contradicting the generalized residue formula
\[1=\text{deg}(\mathcal O(1))=-\text{Res}(\nabla^1)=-\sum_{k=1}^4 \pm \rho\neq1\]
for $\rho\in(0,\tfrac{1}{4}).$
Any holomorphic $\mathrm{SL}(2,\C)$ gauge transformations of
$V$ are of the form
\[g=\begin{pmatrix} a & 0\\P&\tfrac{1}{a}\end{pmatrix},\]
where $a\in\C^*$ and $P\in\mathcal H^0(\C P^1,O(2)).$ To fix $P$ we can choose its values at 3 of the four singular points, e.g., at $z_1,\dots,z_3$, 
such that
\[\text{Res}_{z_k}(\nabla^{-1}+a^{-1}\alpha P)=\rho\]
for
$k=1,\dots,3.$
The gauged connection is then given by 
\[\nabla.g=\begin{pmatrix} \nabla^{-1}+a^{-1}\alpha P&a^{-2}\alpha\\ a^2\beta-a\nabla P-\alpha P^2&\nabla^{1}-a^{-1}\alpha P\end{pmatrix}.\]
By construction the upper left diagonal entry has first order poles at  $z_1, ..., z_4$ with residue $\rho$ at $z_1, ..., z_3.$ The upper right entry  $a^{-2}\alpha$ has poles of order 1 at $z_1,\dots,z_4.$ Therefore,
the lower left entry cannot have poles at $z_1,\dots,z_3$ since the eigenvalues of the residues of $\nabla .g$ are all $\pm \rho$. 
Moreover, the lower left entry must have a pole at $z_4$.
Thus, after a suitable choice of $a\in \C^*$ we can assume that the lower left entry of $\nabla.g$ is 
of the form
\[(c_0+z)dz,\]
for some $c_0\in\C$,
with respect to
the standard trivialisation of the bundle 
$\mathcal O(-1)\oplus\mathcal O(1)$ over $\C$ (given by sections with first order pole or zero, respectively,  at $z=z_4=\infty$). To be more explicit, let
\[h=\begin{pmatrix}z&0\\0&z^{-1}\end{pmatrix}\]
be the cocyle for the bundle $V$, i.e.,  the standard frame of $V$ is given by $V_0= \underline{\C}^2\rightarrow U_0\subset \C P^1$ and $V_\infty=\underline{\C}^2\rightarrow U_\infty\subset \C P^1$ with transition function $h$ on $U_0 \cap U_\infty$ mapping $V_0$ to $V_\infty$.
Since the eigenvalues of the residue of $\nabla$ at $z=\infty$ are $\pm \rho,$ we obtain that the connection 1-form with respect to the standard frame on $U_0$
is given by
\begin{equation}\label{connection1formU0}\omega=\begin{pmatrix}
\frac{\rho(1-3 z^2)}{z-z^3}&\frac{8 \rho^2-6 \rho+1}{z-z^3}\\ c_0+z&-\frac{\rho(1-3z^2)}{z-z^3}. 
\end{pmatrix}dz.\end{equation}
Note that the connection 1-form for the frame over $U_\infty$ is given by $\omega=hdh^{-1}+h\omega h^{-1}$,
which has a first order pole at $z=\infty.$
The affine line of logarithmic connections on $V$ is thus parametrized by $c_0\in\C.$

For the second part of the Proposition, note that the moduli space  of logarithmic connections with prescribed local monodromies 
is a smooth complex manifold of dimension 2  away from reducible connections. Therefore, we introduce a second parameter $E$ and consider the holomorphic rank $2$ bundle $V(E)$ over $\C P^1$ defined by
the cocycle 
\[h^E=\begin{pmatrix} z &- E\\0&z^{-1}\end{pmatrix}.\]

on $\C^*=U_0 \cap U_\infty.$ For $E\neq0$ and with respect to the standard frame over $U_0$ the sections $s_1=\begin{pmatrix}1\\\tfrac{z}{E}\end{pmatrix}$ and  $s_2=\begin{pmatrix}0\\1\end{pmatrix}$ extends to global holomorphic sections
without zeros of determinant $s_1\wedge s_2=1.$ Therefore, the corresponding holomorphic bundle is trivial, 
while for $E=0$ we have $V(E)= \mathcal O(-1)\oplus\mathcal O(1)$.

Let $\nabla^{E, c_0}$ be the logarithmic connection on $V(E)$ with connection 1-form over $U_0$
\[\omega^{E,c_0}=\left(
\begin{array}{cc}
 \frac{\rho(1-3 z^2)}{z-z^3}+E & \frac{-c_0^2 E^2+6 \rho (c_0 E-E z+1)+c_0 E (E z-2)-8 \rho^2-E^2 z^2+E^2+E z-1}{z^3-z} \\
 c_0+z & -\frac{\rho(1-3 z^2)-E z^3+E z}{z-z^3} \\
\end{array}
\right),\]
which coincides with \eqref{connection1formU0} for $E=0$, and parametrizes an open neighborhood of the unstable line inside the moduli space of irreducible logarithmic connections.
Consider the $z$-independent matrix
\[C=\left(
\begin{array}{cc}
 \frac{c_0 E-2 \rho+1}{E (c_0 E-2 \rho+E+1)} & 0 \\
 -\frac{E}{c_0 E-2 \rho+E+1} & 1 \\
\end{array}
\right)\]

and the gauge
$$l^E=\left(
\begin{array}{cc}
 0 & 1 \\
 -1 & \frac{z}{E} \\
\end{array}
\right).$$

Then  direct computation gives
\[(\nabla).(l^E C)= \nabla^{u_0,s_0}\]
where $\nabla^{u_0,s_0}$ is the Fuchsian system defined in \eqref{nablaus} with parameters given by 
\begin{equation}
\begin{split}
u_0&=-\frac{(c_0+1) E+1-2\rho}{(c_0-1) E+1-2 \rho}\\
s_0 &=-\frac{( (1-c_0) E+2 \rho-1) ((1-c_0) E+4 \rho-1)}{2 E}.
\end{split}
\end{equation}
In particular,  the function $u=u(E,c_0)$ is holomorphic in a neighbourhood of $E=0$, and maps 
the  exceptional line $\{E=0,c_0\in\C\}$ of logarithmic connections on $\mathcal O(-1)\oplus\mathcal O(1)$ to $u_0=-1.$
\end{proof}

\begin{remark}
A useful observation is that for the family of logarithmic connections $\nabla^{E, c_0}$ defined in the proof, the product $(u+1)\cdot s$ has the simple form
\begin{equation}\label{up1sexpansion}(u(E,c_0)+1)\cdot s(E,c_0)=1-4\rho+(c_0-1) E\end{equation}
which extends holomorphically to $E=0.$
In particular, a unstable logarithmic connection can only appear as the limit of a  family of stable Fuchsian systems $\lambda \mapsto \nabla^{u_\lambda, s_\lambda}$  for $\lambda\to\lambda_0\in\C^*$
if and only if ${\displaystyle \lim_{\lambda \rightarrow \lambda_0}} (u_\lambda+1)s_\lambda = 1-4 \rho.$ \end{remark}

\begin{lemma}\label{lemma_modulipara}
Let $\nabla$ be a SL$(2, \C)$ logarithmic connection on $\C P^1$ with four singular points
$z_1, ..., z_4 $ and
with parabolic weights $\rho \in (0, \tfrac{1}{4})$, Let $\nabla$ be symmetric
with respect to the symmetries $\delta$ and $\tau$.  If $\nabla$ has unitarizable monodromy, then $\nabla$ is a Fuchsian system, i.e, the underlying holomorphic bundle is $V= \mathcal O \oplus \mathcal O$.  

Moreover, if  $\nabla$ is Fuchsian, then its gauge class can be 
 represented (unique up to conjugation with elements of the finite group generated by $D$ and $C$ defined in \eqref{CD}) by
 \[d+\sum_{k=1}^3A_k\frac{dz}{z-z_k,}\]
where $A_k\in\mathfrak{sl}(2,\C)$ satisfying
$A_3=D^{-1}A_1D$, $A_4= -A_1-A_2-A_3=D^{-1}A_2D$.  
\end{lemma}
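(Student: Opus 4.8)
The plan is to treat the two assertions separately. For the first I would combine the Grothendieck splitting with the stability dichotomy already recorded. By the degree argument preceding Proposition \ref{up1extension}, the underlying holomorphic bundle of any logarithmic connection with weights $\rho\in(0,\tfrac14)$ and four poles is either the trivial bundle $\mathcal O\oplus\mathcal O$ (the Fuchsian case) or $\mathcal O(-1)\oplus\mathcal O(1)$. In the latter case Proposition \ref{up1extension} shows the induced parabolic structure is unstable, and a logarithmic connection with unstable parabolic structure is never unitarizable, as recorded after the Mehta--Seshadri--Biquard correspondence. Hence a unitarizable $\nabla$ must sit on $\mathcal O\oplus\mathcal O$, i.e.\ is Fuchsian; note that symmetry is not needed for this part.

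For the second assertion I would first record how the involutions act on the punctures $z_1=-1,z_2=0,z_3=1,z_4=\infty$: a direct computation gives $\delta=(z_1\,z_3)(z_2\,z_4)$ and $\tau=(z_1\,z_4)(z_2\,z_3)$. Since the residues of a Fuchsian system are invariant under a M\"obius pullback (the normalization $\sum_k A_k=0$ is exactly what kills the spurious pole of the pulled-back form at infinity), the symmetry relations $\delta^*\nabla=\nabla.\tilde D$ and $\tau^*\nabla=\nabla.\tilde C$ translate into $A_3=\tilde D^{-1}A_1\tilde D$, $A_4=\tilde D^{-1}A_2\tilde D$ and $A_4=\tilde C^{-1}A_1\tilde C$, $A_3=\tilde C^{-1}A_2\tilde C$. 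After rescaling I may assume $\tilde D,\tilde C\in\mathrm{SL}(2,\C)$; for irreducible $\nabla$ they are then unique up to sign, so $\tilde D^2$, $\tilde C^2$ and the commutator $\tilde D\tilde C\tilde D^{-1}\tilde C^{-1}$ are well-defined scalars in $\{\pm\mathrm{Id}\}$.

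The heart of the matter, and the step I expect to be the main obstacle, is to pin these signs down so that $\langle\tilde D,\tilde C\rangle$ is the quaternion group, and I would extract this from unitarizability rather than from the bare symmetry. Because $\delta$ and $\tau$ preserve the (stable) parabolic structure, uniqueness of the Mehta--Seshadri metric lets me choose $\tilde D,\tilde C$ unitary with respect to it. A unitary element of $\mathrm{SL}(2,\C)$ squaring to $+\mathrm{Id}$ is $\pm\mathrm{Id}$; but $\tilde D=\pm\mathrm{Id}$ would force $A_1=A_3$, hence the coalescing $L_1=L_3$, contradicting the distinctness of the four quasiparabolic lines of a stable structure (Lemma \ref{lemmastable}). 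Therefore $\tilde D^2=\tilde C^2=-\mathrm{Id}$, and if $\tilde D,\tilde C$ commuted they would be simultaneously diagonalizable with $\tilde C=\pm\tilde D$, giving $A_3=A_4$ and again forcing two quasiparabolic lines to coalesce; so they anticommute and $\langle\tilde D,\tilde C\rangle\cong Q_8$.

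Since $Q_8$ has a unique two-dimensional irreducible representation, a single conjugation brings $(\tilde D,\tilde C)$ to $(D,C)$ of \eqref{CD}; concretely I would first conjugate $\tilde D$ to the diagonal form $D$ and then use a remaining diagonal gauge $\mathrm{diag}(t,t^{-1})$ to put the necessarily anti-diagonal $\tilde C$ into the shape $C$. With $\tilde D=D$ the relation $\delta=(z_1\,z_3)(z_2\,z_4)$ yields exactly $A_3=D^{-1}A_1D$ and $A_4=D^{-1}A_2D$, while $A_4=-A_1-A_2-A_3$ is the residue-free condition at $z_4=\infty$. Finally, uniqueness up to $\langle D,C\rangle$ follows because any further gauge preserving the normalization $(\tilde D,\tilde C)=(D,C)$ must conjugate $D$ and $C$ into $\{\pm D\}$ and $\{\pm C\}$ respectively (the gauge between equivalent irreducible Fuchsian systems being unique up to sign), and these transformations are realized precisely by the elements of $Q_8=\langle D,C\rangle$. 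The strictly semi-stable case is easier and I would handle it separately: by Lemmas \ref{reduciblesymmetric} and \ref{reducible} such a $\nabla$ is gauge-equivalent to an explicit diagonal connection, where the extra gauge freedom (the gauge is no longer unique up to sign) lets me simply choose $\tilde D=D$, $\tilde C=C$ and verify the stated relations by direct computation.
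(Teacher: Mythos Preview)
Your argument for the first assertion is essentially the paper's: Grothendieck splitting plus Proposition~\ref{up1extension} plus the Mehta--Seshadri correspondence. For the second assertion your overall strategy---pin down $\tilde D^2,\tilde C^2$ and the commutator as $\pm\mathrm{Id}$, rule out the $+$ signs, and conjugate the resulting $Q_8$ to its unique $2$-dimensional irreducible representation---is also the paper's. But two of your tactical choices differ from the paper, and one of them opens a gap.

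The harmless difference is your appeal to the Mehta--Seshadri metric in order to make $\tilde D,\tilde C$ unitary before concluding that $\tilde D^2=\mathrm{Id}$ forces $\tilde D=\pm\mathrm{Id}$. This detour is unnecessary: in $\mathrm{SL}(2,\C)$ any element squaring to the identity has both eigenvalues equal to $1$ or both equal to $-1$ and is diagonalizable, hence already equals $\pm\mathrm{Id}$. The paper uses exactly this elementary fact and never invokes a metric.

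The genuine gap is your case split. You divide according to \emph{stable} versus \emph{strictly semi-stable} parabolic structure, while the paper divides according to \emph{irreducible} versus \emph{reducible} $\nabla$. These are not the same dichotomy: there exist irreducible symmetric Fuchsian systems whose parabolic structure is strictly semi-stable---take one of the diagonal connections of Lemma~\ref{reducible} and add a nonzero symmetric strongly parabolic Higgs field from Lemma~\ref{Higgsfield}; the last paragraph of the proof of Lemma~\ref{reduciblesymmetric} shows precisely that such a $\nabla$ is not reducible. Your final paragraph handles the strictly semi-stable case by asserting ``the gauge is no longer unique up to sign'', which is false for these irreducible $\nabla$, and your stable-case contradictions (coalescing $L_1=L_3$, or $A_3=A_4$) rely on all four quasiparabolic lines being distinct, which they are not here. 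The paper circumvents this by deriving the contradiction from irreducibility rather than from stability: $\tilde D=\pm\mathrm{Id}$ gives $A_1=A_3$ and $A_2=A_4$, whence $A_1+A_2=0$, so all residues are $\pm A_1$ and share an eigenline, contradicting irreducibility; similarly, $\tilde C$ diagonal forces reducibility. If you replace your stability-based contradictions by this irreducibility argument (and drop the Mehta--Seshadri step), your proof becomes the paper's.
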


\begin{proof}
Since a unitarizable logarithmic connection $\nabla$ has semi-stable parabolic structure by the Mehta-Seshadri theorem \cite{MS}, the underlying holomorphic bundle $V$ is trivial as a consequence of Proposition \ref{up1extension}. Therefore, $\nabla$ is a Fuchsian system.

To show that every  Fuchsian system $\nabla$ with parabolic weight $\rho$ satisfying the desired symmetries, we distinguish between irreducible and reducible $\nabla$.

In the reducible case, we can apply Lemma \ref{reducible} to obtain that $\nabla$ is given by
\[d+\sigma_{-1}\begin{pmatrix} \rho&0\\0&-\rho\end{pmatrix} \frac{dz}{z+1}+\begin{pmatrix} \rho&0\\0&-\rho\end{pmatrix} \frac{dz}{z}+\sigma_1 \begin{pmatrix} \rho&0\\0&-\rho\end{pmatrix} \frac{dz}{z-1},
\] where $(\sigma_{-1},\sigma_1)\in\{(-1,-1),(-1,1),(1,-1)\}$ up to conjugation. 
From here it is a straight forward computation to see that $\nabla$ has the desired symmetries after a suitable gauge.

In the second case, $\nabla$ is irreducible and $V$ has no parallel line bundles. Then,  since $\nabla$ is symmetric, there  exist  $\tilde C,\tilde D\in\mathrm{SL}(2,\C)$ such that
\[\delta^*\nabla=\nabla.\tilde D=\tilde D^{-1}\nabla\tilde D\]
and 
\[\tau^*\nabla=\nabla.\tilde C=\tilde C^{-1}\nabla\tilde C.\]
From irreducibility $\tilde C$ and
$\tilde D$ are unique up to sign. Since $\delta^2=\tau^2 = $Id 
 this implies $\tilde D^2=\pm$ Id and $\tilde C^2=\pm$ Id.

 If $\tilde D^2=$Id,  we would have $\tilde D=\pm$Id and therefore writing
\[\nabla=d+ \sum_{k=1}^3 B_k \frac{dz}{z-p_k}\]
yields 
\[B_3=B_1,\quad B_4=B_2 \quad \text{and} \quad B_1+B_2+B_3+B_4=0.\]
Therefore, $\nabla$ would be reducible which is a contradiction.  Thus $\tilde D^2=-$Id, and hence $\tilde D$ is conjugate to $D,$   i.e., there is $g\in\mathrm{SL}(2,\C)$ with
\[\tilde D=gDg^{-1}.\]
The connection $\hat\nabla.g$ then satisfies  $\delta^*\hat\nabla=\hat \nabla.D$ implying the $\delta$-symmetry $\hat\nabla.g$.

For the $\tau$-symmetry  we first use
\[\delta\circ\tau=\tau\circ\delta.\]
Therefore, by irreducibility
\[D\tilde C=\pm\tilde CD.\] 
This implies $\tilde C$ being either diagonal or off-diagonal. If $\tilde C$ is diagonal, $\tilde C^2=\pm$Id yields again reducibility of $\nabla$.
Thus $\tilde C$ must be off-diagonal and $\tilde C^2=-$Id. This gives
\[\tilde C=\pm C\quad\text{or}\quad \tilde C=\pm\begin{pmatrix} 0&1\\-1&0\end{pmatrix} .\]
In the first case $\tilde C$ is of the desired form. In the latter case, $\tilde C$ and $\pm C$ differ by the conjugation with 
\[S^{-1}= \begin{pmatrix} e^{-\frac{\pi i}{4}}&0\\0&e^{\frac{\pi i}{4}}\end{pmatrix}.\]
Uniqueness follows again from irreducibility and the fact that
$\pm$Id are the only $\SL(2,\C)$ matrices commuting with both $C$ and $D$. Moreover, the the space of matrices which either commute or anti-commute 
with $C$ and $D$ are spanned by Id, $C$, $D$ and $CD$. 
\end{proof}

\section{Existences of Fuchsian DPW potentials}\label{sec:existenceofpot}

In this section we want to bring together the analytic properties of the Plateau solutions in Section \ref{Lawson} with the properties of the moduli space of Fuchsian systems in Section \ref{Fuchsian} to show the existence of a Fuchsian DPW potential for the Lawson surfaces $\xi_{1,g}$ for every $g \geq1.$
Since this section is heavily based on \cite{HHT2}, we change to the notations and normalizations of \cite{HHT2} in the following. In particular, we choose the 4-punctures $\Sigma$ to be $p_1, ..., p_k$ as in \eqref{uspuncturesp} and adjust the symmetries $\delta$ and $\tau$ accordingly. These two setups differ only by a M\"obius transformation mapping $z_1,..., z_k$ to $p_1,..., p_k$ and do not affect any properties we have shown. Moreover, the weights of the Fuchsian systems in this section will be $t\in (0, \tfrac{1}{4})$ instead of $\rho.$

\begin{definition}
A DPW potential on a Riemann surface $M$ is a closed (i.e., holomorphic) complex linear 1-form

\[\eta\in\Omega^{1,0}(M,\Lambda \mathfrak{sl}(2,\C))\]

with values in the  loop algebra $$ \Lambda\mathfrak{sl}(2,\C):=\{\xi\colon S^1\to \mathfrak{sl}(2,\C)\mid \xi \text{ is real analytic }\}$$ such that $\lambda \eta$ extends  holomorphically to the entire unit disc $\mathbb D\subset \C.$
Moreover, its residue at $\lambda=0$ 
\[\eta_{-1}:=\text{Res}_{\lambda=0} (\eta)\]
is a nowhere vanishing and nilpotent 1-form. 
\end{definition}

To obtain a well-defined surface, the DPW potential $\eta$ must be unitarizable for $\lambda \in S^1.$ Therefore, its values on the punctured disc $\D_{1+\varepsilon}$ determines the gauge class of $\eta$ for all $\lambda \in \C^*$ via Schwarzian reflexion principle. For $M = \Sigma$ being the $4$-punctured sphere, there exist a particular simple class 
%choice 
of DPW potentials for which $\eta$ is  a Fuchsian system for all $\lambda \in \overline\D^*_{1+ \varepsilon}\subset\C^*$ for some small $\varepsilon >0.$ We will refer to these potentials as {\em Fuchsian potentials} in the following. 

\subsection{The local structure}
The following results are 
 a generalization of \cite[Proposition 4.2]{BoHeSch} to arbitrary angles $t\in(0,\tfrac{1}{4}]$.  By Theorem \ref{realanalyticity} all Gauss-Codazzi data depend real analytically on $t$.
 
 \begin{lemma}\label{lem:GCdata-dependence}
The first fundamental form of $f_t \colon \Sigma \rightarrow \mathbb S^3$ has a conical singularity with cone angle $4\pi t$ at $p_1, ...p_4$. The Hopf differential $\mathcal Q$ is given by a meromorphic quadratic differential with first order poles at the four singular points. 
\end{lemma}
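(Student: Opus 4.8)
The statement splits into a metric claim (cone angle $4\pi t$) and a claim about the Hopf differential, and my plan is to reduce both to an elementary angle computation on $\mathbb S^3$, the Schwarz reflection principle, a local analysis via the Gauss equation, and a global degree count on $\Sigma\cong\C P^1$. I would first compute the interior angle of the geodesic $4$-gon $\Gamma_t$ at $P_1$ (and, symmetrically, $P_2$): the two edges issuing from $P_1=(1,0)$ run along the great circles through $Q_1=(0,1)$ and $Q_2=(0,e^{2\pi i t})$, with unit tangents $(0,1)$ and $(0,e^{2\pi it})$ at $P_1$, which enclose the angle $2\pi t$ in $\R^4$; the same computation gives $\tfrac\pi2$ at $Q_1,Q_2$. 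By the boundary regularity of the Plateau solution at a corner, $f_t$ is a conformal minimal immersion up to the boundary away from the vertices, and the induced metric has a conical singularity of cone angle equal to this interior angle. Passing to the double $\underline\Sigma=\mathring{\mathbb D}\cup_\sim\overline{\mathring{\mathbb D}}$, the two wedges of angle $2\pi t$ are glued along both edges meeting at $P_1$, so the cone angle doubles to $4\pi t$; and since $\pi\colon\Sigma\to\underline\Sigma$ is branched only over $Q_1,Q_2=0,\infty$ and hence unramified over $P_1,P_2$, each marked point $p_k$ inherits the cone angle $4\pi t$.

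For the Hopf differential, recall that for a minimal surface in $\mathbb S^3$ the Hopf differential $\mathcal Q=q\,dz^2$ is a holomorphic quadratic differential on the regular part. Each boundary edge is a great-circle arc lying in a totally geodesic $\mathbb S^2$, across which $f_t$ extends by Schwarz reflection; the reflection is an orientation-reversing isometry, so $\mathcal Q$ is real along the edge and extends holomorphically across it. Thus $\mathcal Q$ descends to a holomorphic quadratic differential on $\Sigma\setminus\{p_1,\dots,p_4\}=\C P^1\setminus\{p_1,\dots,p_4\}$. To bound its order at a cone point, I would write the metric as $e^{2u}|dz|^2$ with $u=(2t-1)\log|z|+(\text{bounded})$ near $p_k$ (this is the content of cone angle $4\pi t$) and use the Gauss equation $\Delta_0 u=|q|^2e^{-2u}-e^{2u}$. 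If $q$ has a pole of order $m$, the two terms on the right behave like $|z|^{-2m+2-4t}$ and $|z|^{2(2t-1)}$; finiteness of the total curvature near the cone point (equivalently, boundedness of $u-(2t-1)\log|z|$) forces both exponents to exceed $-2$, and the integrability of the first term reads $m<2-2t<2$, whence $m\le 1$. Therefore $\mathcal Q$ has at most a first order pole at each $p_k$.

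For the global conclusion, since $\Sigma\cong\C P^1$ and $\deg K_{\C P^1}^{2}=-4$, any non-zero meromorphic quadratic differential obeys $(\#\text{zeros})-(\#\text{poles})=-4$. As $\mathcal Q$ is holomorphic off $\{p_1,\dots,p_4\}$ with at most simple poles there, its total pole order $P$ satisfies $P\le4$ while $P=Z+4\ge4$, so $P=4$ and $Z=0$: $\mathcal Q$ has exactly a simple pole at each $p_k$ and no zeros (and $\mathcal Q\not\equiv0$ since $f_t$ is not totally geodesic). In the normalization \eqref{uspuncturesp} this pins $\mathcal Q$ down to a constant multiple of $dz^2/(z^4+1)$, which matches the meromorphic quadratic differential claimed.

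The delicate point, which I expect to be the main obstacle, is the corner regularity underlying both parts: one must know that $f_t$ is conformal up to each vertex and that the induced metric has a genuine conical singularity of the stated angle, so that $u-(2t-1)\log|z|$ is bounded and $\mathcal Q$ is meromorphic rather than carrying an essential singularity, and this uniformly in $t\in(0,\tfrac14]$. Here the clean route is to establish the claim first for rational $t=\tfrac1{2g+2}$, where $M_t$ is compact and $\xi_{1,g}$ is an honest (branched) minimal immersion with the cone structure and simple-pole Hopf differential known from \cite{Lawson} and \cite{BoHeSch}, and then to propagate it by Theorem \ref{realanalyticity}: the cone angle $4\pi t$ extends by continuity, while the Laurent coefficients of order $\le-2$ of $\mathcal Q_t$ at each (fixed) $p_k$ depend real analytically on $t$ and vanish on the dense set of rational parameters, hence vanish for all $t$ by the identity theorem. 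This real-analytic extension is also precisely where the passage from the specific angles treated in \cite{BoHeSch} to arbitrary $t$ has to be justified.
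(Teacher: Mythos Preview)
Your proposal is correct, and its final paragraph is precisely the paper's proof: establish the claim for rational $t$ by invoking Lawson's analysis of the compact surface $M_t$ (and its quotient), then propagate to all $t$ via the real analyticity of Theorem~\ref{realanalyticity}. The paper stops there, delegating the entire rational case to \cite[Section~4]{Lawson}.

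What you add in the first three paragraphs is a more explicit and largely self-contained route that the paper does not spell out: the direct computation of the corner angle $2\pi t$ at $P_1,P_2$, the doubling to $4\pi t$ on $\underline\Sigma$ and then on $\Sigma$, the Schwarz-reflection extension of $\mathcal Q$, and the degree count on $\C P^1$ forcing exactly four simple poles. This is a genuine elaboration of what is hidden inside the citation to Lawson, and the degree argument in particular is a clean way to upgrade ``at most simple poles'' to ``exactly simple poles''. The one soft spot is the Gauss-equation integrability step bounding the pole order: as written it presupposes that $u-(2t-1)\log|z|$ is not just bounded but regular enough for $\Delta_0 u$ to be locally integrable, which is exactly the corner regularity you flag as delicate. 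You resolve this the same way the paper does, so there is no gap; just be aware that your direct argument does not stand entirely on its own without that input. One small wording point: you invoke ``the dense set of rational parameters'' for the identity theorem, which indeed works for all rational $t\in(0,\tfrac14]$ as in the paper, whereas the specific values $t=\tfrac{1}{2g+2}$ you mention first only accumulate at $0$; either use all rationals or note that real analyticity near $t=0$ (from \cite{HHT2}) still lets the identity theorem apply.
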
 
\begin{proof}
For rational angles $t\in (0, \tfrac{1}{4}]\cap\mathbb Q,$ the claim follows from the discussion in \cite[Section 4]{Lawson} by going to the quotient $\Sigma$.
For irrational $t$, the claim then follows from the real analytic dependence of the Plateau solutions in $t$, see Theorem \ref{realanalyticity}.
\end{proof}
 
\begin{proposition}\label{pro:localDPW}
For each singular point $p_k$ there exists an open neighbourhood $U_k\subset\Sigma$ and 
 a Fuchsian DPW potential $d+ \xi_t$ for the equivariant minimal surface restricted to the universal covering $\wt U_k\to U_k\setminus\{p_k\}$. The eigenvalues of the residue at $z=p_k$ of the Fuchsian DPW potential
are independent of $\lambda$ and given by $\pm t.$
\end{proposition}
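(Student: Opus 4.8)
The plan is to reduce the global problem to a purely local one near each singular point $p_k$ and then exhibit an explicit Fuchsian DPW potential there by means of the associated family of flat connections. Because the statement is local, I may work on a small coordinate disc $U_k$ around $p_k$ with a coordinate $w$ vanishing at $p_k$; by Lemma \ref{lem:GCdata-dependence} the induced metric has a conical singularity of cone angle $4\pi t$ and the Hopf differential $\mathcal{Q}$ has a first-order pole at $p_k$. The minimal surface, being equivariant under the rotation group $\mathcal{G}$ around the relevant axis, is invariant in the appropriate sense, so on the universal cover $\wt U_k\to U_k\setminus\{p_k\}$ the Gauss–Codazzi data take a standard rotationally-equivariant form. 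This is exactly the setting of \cite[Proposition 4.2]{BoHeSch}, and the task is to generalize that local model from the specific angle treated there to an arbitrary $t\in(0,\tfrac14]$.

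\emph{First} I would write down the associated $\S^1$-family of flat $\SL(2,\C)$-connections $\nabla^\lambda=\lambda^{-1}\Phi+\nabla-\lambda\Psi$ coming from the harmonicity of $f_t$, as in the Monodromy Problem (i)--(iii) of the introduction, with $\Phi$ encoding the Hopf differential and $\nabla$ the Levi-Civita/normal connection. \emph{Next}, on the simply connected domain $\wt U_k$ I would apply the loop-group splitting underlying the DPW method: the family $\nabla^\lambda$ is gauge equivalent to $d+\eta$ for a $\lambda^{-1}$-simple potential $\eta\in\Omega^{1,0}(\wt U_k,\Lambda\mathfrak{sl}(2,\C))$ whose residue $\eta_{-1}$ at $\lambda=0$ is nilpotent and nowhere vanishing (the normalization in the Definition of a DPW potential). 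The equivariance forces $\eta$ to be invariant under the deck transformation of $\wt U_k\to U_k\setminus\{p_k\}$, and the conical structure of the metric of cone angle $4\pi t$ is what pins down the residue of $\eta$ at $p_k$. The computation of that residue is the crux: the rotation number of the metric singularity and of the Hopf differential pole together determine the local monodromy, and I expect the eigenvalues of $\text{Res}_{p_k}\xi_t$ to come out as $\pm t$, independent of $\lambda$, precisely because the $\lambda$-conjugation by $\text{diag}(\lambda^{1/2},\lambda^{-1/2})$ shifts the off-diagonal parts but leaves the diagonal residue eigenvalues fixed.

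\emph{The main obstacle} I anticipate is making the passage from the abstract existence of a $\lambda^{-1}$-simple potential to the stronger \emph{Fuchsian} conclusion, namely that $\xi_t$ has only first-order poles with $\lambda$-independent residue eigenvalues $\pm t$. The naive DPW gauge produces a potential that is holomorphic on the disc but whose behaviour at the puncture need not a priori be Fuchsian; one must choose the gauge adapted to the equivariant structure so that the singular part is exactly logarithmic. Concretely I would conjugate the local family into a frame in which the deck transformation acts by a fixed diagonal loop, read off the exponents from the cone angle $4\pi t$ (so that the half-angle $2\pi t$ gives monodromy eigenvalues $e^{\pm 2\pi i t}$, hence weight $t$), and then verify via Remark \ref{extension} (diagonalizability of the connection $1$-form near a non-resonant puncture, using $t\in(0,\tfrac14)$) that the resulting connection $1$-form has at most a simple pole. \emph{Finally}, the $\lambda$-independence of the eigenvalues follows because the loop-group conjugation that realizes the $\lambda$-family acts by a diagonal loop and therefore does not alter the diagonal residue data; assembling the neighbourhoods $U_k$ completes the proof.
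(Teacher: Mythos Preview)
Your proposal takes a genuinely different route from the paper. The paper's argument is a two-line reduction: for rational $t$ the statement is already \cite[Section 4]{BoHeSch}, where the local DPW potential is obtained via Dorfmeister's normalized potential \cite{Wu}; since the entries of the normalized potential are explicit holomorphic functions of the Hopf differential and the conformal factor, the Fuchsian form and the eigenvalues $\pm t$ then persist for irrational $t$ by the real analytic dependence of these data on $t$ (Lemma \ref{lem:GCdata-dependence} and Theorem \ref{realanalyticity}). The paper never redoes the residue computation for general $t$; it inherits everything from the rational case by continuity. Your approach instead tries to build the Fuchsian potential directly for every $t$ from the associated family plus equivariance, which is closer in spirit to what \cite{BoHeSch} does in the rational case and would, if completed, make the proposition independent of the analyticity input.

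There is, however, a genuine gap at the step you yourself flag as the ``main obstacle''. The abstract Iwasawa splitting on $\wt U_k$ produces \emph{some} holomorphic potential, but your mechanism for forcing a simple pole with $\lambda$-independent eigenvalues does not work as written. In particular, your final claim that ``the loop-group conjugation that realizes the $\lambda$-family acts by a diagonal loop and therefore does not alter the diagonal residue data'' is incorrect: the positive gauge in the Iwasawa/Birkhoff factorization is a full $\Lambda^+\mathrm{SL}(2,\C)$ element, not a diagonal loop, and it does mix the residue entries. What actually pins down the eigenvalues is the explicit form of the normalized potential of \cite{Wu}: its coefficients are determined pointwise by $\mathcal{Q}$ and the metric, and plugging in the local expansions from Lemma \ref{lem:GCdata-dependence} (first-order pole for $\mathcal{Q}$, cone angle $4\pi t$) yields $\det\text{Res}_{p_k}\xi_t=-t^2$ identically in $\lambda$. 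Without either invoking that formula or falling back on the paper's rational-plus-analyticity shortcut, the ``I expect'' at your crux remains unproven.
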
 
\begin{proof}
For rational $t$ the Proposition follows from \cite[Section 4]{BoHeSch} using Dorfmeister's normalized potential, see \cite{Wu}. Since the entries of the normalized potential are determined by the Hopf differential and the first fundamental form, the general result is due to the real analyticity of these data in $t$.
\end{proof}

\subsection{The global structure}\label{globalDPW}
Following 
\cite[Section 4]{BoHeSch} there exist two families of flat connections
%DPW potentials 
for the minimal surface $f_t$ restricted to each $U_k$. The first is given by the local DPW potential $d+\xi_t$ provided by Proposition \ref{pro:localDPW}. The second is given by the associated $\C_*$-family of flat connections $\nabla^\lambda_t$ on $\Sigma\setminus\{p_1,\dots,p_4\}$ obtained from the harmonic map $f_t$ into $\mathbb S^3$, see \cite{Hitchin}. By construction of the DPW potential, the two families of flat connections are gauge equivalent on $U_k\setminus\{p_k\},$ $k= 1,..., 4,$ by a positive gauge in $\lambda$. Through these local gauges, we can replace $\nabla^\lambda_t$ on $U_k$  by the Fuchsian potential $d+ \xi_t$ to obtain the following result:
\begin{theorem}\label{1stnabla}
On $\Sigma$ there exist a family of flat connections $\wt\nabla^\lambda_t$ with the following properties:

\begin{itemize}
\item $\wt\nabla^{\lambda}_t$ is a $\C_*$-family of logarithmic SL$(2, \C)$-connections  with singular part $p_1+ ...+p_4$;
\item the parabolic weights of $\wt\nabla^{\lambda}_t$ are $\pm t$;
\item  $\wt\nabla^{\lambda}_t$ are unitarizable for all $\lambda\in \mathbb S^1;$ 
\item $\wt\nabla^{\lambda}_t$  determines the complete minimal surface $f_t$ through the DPW recipe, see \cite[Theorem 1.2]{HHS} or \cite[Section 1.4]{HHT}.
\end{itemize}

\end{theorem}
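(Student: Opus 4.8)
The plan is to assemble the global family $\wt\nabla^\lambda_t$ by gluing two kinds of locally defined data: the intrinsic associated family $\nabla^\lambda_t$ coming from the harmonic map $f_t$, which lives on $\Sigma\setminus\{p_1,\dots,p_4\}$ but has no control of its behaviour at the punctures, and the local Fuchsian potentials $d+\xi_t$ from Proposition \ref{pro:localDPW}, which are logarithmic (hence well-controlled) near each $p_k$ but only defined on the small neighbourhoods $U_k$. The starting point is the observation, recorded in Section \ref{globalDPW}, that on each punctured neighbourhood $U_k\setminus\{p_k\}$ these two $\lambda$-families of flat connections are gauge equivalent by a gauge that is \emph{positive} in $\lambda$ (i.e.\ extends holomorphically across $\lambda=0$, coming from the loop group Iwasawa/Birkhoff factorisation underlying the DPW construction). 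First I would fix, once and for all, these four local positive gauges $g_k=g_k(\lambda)$ on $U_k\setminus\{p_k\}$.

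\smallskip

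Next I would use the $g_k$ to modify $\nabla^\lambda_t$ only on the neighbourhoods $U_k$: on $\Sigma\setminus\bigcup_k U_k$ set $\wt\nabla^\lambda_t:=\nabla^\lambda_t$, and on each $U_k$ set $\wt\nabla^\lambda_t:=(\nabla^\lambda_t).g_k=d+\xi_t$. Because $g_k$ is a genuine gauge transformation on the overlap $U_k\setminus\{p_k\}$, the two prescriptions agree there, so $\wt\nabla^\lambda_t$ is a well-defined flat connection on all of $\Sigma\setminus\{p_1,\dots,p_4\}$; flatness is preserved since gauging preserves it. The point of the modification is purely local behaviour at the punctures: near $p_k$ the connection now \emph{equals} the Fuchsian potential $d+\xi_t$, whose connection $1$-form is meromorphic with at most a first-order pole at $p_k$. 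By the criterion recalled after \eqref{partialnabla}, this is exactly what it means for $\wt\nabla^\lambda_t$ to be a logarithmic $\SL(2,\C)$-connection with singular divisor $p_1+\dots+p_4$, giving the first bullet. For the second bullet, Proposition \ref{pro:localDPW} states the eigenvalues of $\Res_{p_k}(\xi_t)$ are $\pm t$ and independent of $\lambda$; since the residue is a gauge-covariant (indeed conjugation-invariant on eigenvalues) quantity, the parabolic weights of $\wt\nabla^\lambda_t$ are $\pm t$ for every $\lambda$.

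\smallskip

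For the third bullet I would argue as follows. For $\lambda\in\mathbb S^1$ the associated family $\nabla^\lambda_t$ is, by the construction of the $\S^1$-family from a harmonic map into $\mathbb S^3$, unitary with respect to the standard hermitian metric on $\underline\C^2$ (this is condition (ii) of the Monodromy Problem in the Introduction, the ``intrinsic closing''). Since $\wt\nabla^\lambda_t$ is \emph{gauge equivalent} to $\nabla^\lambda_t$ on the punctured surface, its monodromy representation is conjugate to that of $\nabla^\lambda_t$, hence conjugate into $\SU(2)$; equivalently $\wt\nabla^\lambda_t$ is unitarizable for all $\lambda\in\mathbb S^1$. The fourth bullet is then essentially definitional: by construction $\wt\nabla^\lambda_t$ and $\nabla^\lambda_t$ have the same gauge class on $\Sigma\setminus\{p_1,\dots,p_4\}$, and $\nabla^\lambda_t$ is by \cite{Hitchin} precisely the associated family recovering $f_t$ via the DPW recipe of \cite[Theorem 1.2]{HHS}; I would cite this directly.

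\smallskip

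The main obstacle I anticipate is the careful verification that the local gauges $g_k$ are genuinely positive in $\lambda$ \emph{and} convert $\nabla^\lambda_t$ into something with only a first-order pole at $p_k$ uniformly in $\lambda$ on the whole annular neighbourhood, so that the glued object is logarithmic for \emph{every} $\lambda\in\C^*$ rather than merely pointwise. This is where real analyticity in $t$ (Theorem \ref{realanalyticity}) and the explicit local DPW normalisation (Proposition \ref{pro:localDPW}, via Dorfmeister's normalized potential) do the work, so I would lean on those two results to control the gauge uniformly; the remaining checks (flatness, residue eigenvalues, unitarizability) are then formal consequences of gauge-equivalence as sketched above.
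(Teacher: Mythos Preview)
Your proposal is correct and follows essentially the same approach as the paper: the construction given in the paragraph immediately preceding the theorem statement glues the Hitchin associated family $\nabla^\lambda_t$ on $\Sigma\setminus\{p_1,\dots,p_4\}$ with the local Fuchsian potentials $d+\xi_t$ of Proposition~\ref{pro:localDPW} via the positive local gauges, exactly as you describe. Your explicit verification of the four bullets is in fact more detailed than what the paper itself provides; the only cosmetic point is that the glued object naturally lives on a (possibly nontrivial) holomorphic bundle with the $g_k$ as transition functions rather than on $\underline{\C}^2$, which is why the paper subsequently analyses whether $V^\lambda_t\cong\mathcal O\oplus\mathcal O$.
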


The theorem shows that for every $\lambda$ fixed the connection $\wt\nabla^\lambda$ is a logarithmic connection on the $4$-punctured sphere $\Sigma.$ By construction of the minimal surfaces, $\Sigma$ and the induced fundamental forms have the intrinsic symmetries
$$\delta(z)=-z,  \quad \tau(z)= \tfrac{1}{z}, \quad  \text{ and }\quad  \sigma(z) = \bar z.$$
The symmetries $\delta$ and $\tau$ are same as the ones considered in Section \ref{Fuchsian} up to the M\"obius
transformation between the punctures.
As shown in \cite{HHT2} the $\sigma$-symmetry relates the connections $\wt \nabla^\lambda_t$ for different (i.e., complex conjugate) $\lambda$-values while the other symmetries preserve $\lambda$.  Therefore, we require only $\wt \nabla^\lambda_t$ to be equivariant with respect to $\delta$ and $\tau,$ i.e., $\wt \nabla^\lambda_T$ is gauge equivalent to
$\delta^*\wt \nabla^\lambda_t$ and $\tau^*\wt \nabla^\lambda_t.$

Moreover, the $\C^*$-family of flat connections $\wt \nabla^\lambda_t$ induces a $\C$-family of holomorphic structures $\wt \partial^\lambda_t$ on the topologically trivial complex rank $2$ bundle over $\C P^1$, since the underlying $(0,1)$-part
of the connections extends to $\lambda=0$ (see Proposition 4.5. in \cite{BoHeSch}). Let $V^\lambda_t = (V, \wt \partial^\lambda_t)$ denote the corresponding family of holomorphic bundles.
To obtain a Fuchsian DPW potential from $\wt \nabla^\lambda_t$, we need to show that $V^\lambda_t$ is holomorphically trivial for all $\lambda \in \D_{1+\epsilon}.$ 

As we have shown in Section \ref{Fuchsian}, the holomorphic structure on $V^\lambda_t$ is either $ \mathcal O\oplus \mathcal O,$ if the parabolic structure is semi-stable, or $ \mathcal O(-1)\oplus\mathcal O(1),$ if the parabolic structure is unstable. If the underlying holomorphic bundle $V^\lambda_t$ is trivial, then the gauge class of $\wt\nabla^\lambda_t$ can be represented by a Fuchsian system

\[\eta_t^\lambda = d+\sum_{k=1}^4A_k^t(\lambda)\frac{dz}{z-p_k},\] 
where $A_k^t(\lambda)\in\mathfrak{sl}(2,\C).$ 
Moreover,  being equivariant with respect to $\delta$ and $\tau$ translates into 
\begin{itemize}
\item $\delta$ symmetry:
\begin{equation}\label{delta-sym}\delta^*\eta_t^\lambda =D^{-1}\eta_t^\lambda D \quad \text{ with }\quad D=\matrix{i&0\\0&{-i}}\end{equation}
\item $\tau$ symmetry:
\begin{equation}\label{tau-sym}\tau^*\eta_t^\lambda = C^{-1}\eta_t^\lambda C\quad \text{ with }\quad C=\matrix{0&i\\i& 0 }\end{equation}

\end{itemize}
by Lemma \ref{lemma_modulipara}. By \cite[Theorem 4.7]{BoHeSch} the induced holomorphic structure at $\lambda=0$ is trivial, i.e.,  $V_0\cong \mathcal O\oplus\mathcal O.$
Moreover, by \cite{HHT2} and \cite{HHS} there exists $\alpha>0$ and $\varepsilon>0$ such that
for $t\in(0,\alpha)$ and $t \in (\tfrac{1}{4} - \alpha, \tfrac{1}{4}]$
 the bundle  
$V^\lambda_t$ is $\mathcal O\oplus\mathcal O$ for all $\lambda$ in the disc of radius $1+\varepsilon.$ The following main Theorem of this paper shows that  $\alpha \geq \tfrac{1}{4}.$\\

\begin{theorem}\label{thm:existence}
For all $t\in (0, \tfrac{1}{4}]$ there is a  Fuchsian DPW potential $\eta_t$, real analytic in $t$,
 $$\eta_t= d+\sum_{k=1}^4 A_k^t\frac{dz}{z-p_k}$$
defined on the 4-punctured sphere with the following properties:
\begin{itemize}
\item  there exists $\epsilon>0$ such that $\eta_t$ is well-defined for all $\lambda\in \mathbb D_{1+\epsilon}\setminus\{0\};$
\item the eigenvalues at each of the residues $A_k^t$ are $\pm t;$
\item the monodromy representation of $\eta_t$ is unitarizable for all $\lambda\in \mathbb S^1;$
\item the potential is symmetric, i.e., $\delta^*\eta=\eta.D,$ $\tau^*\eta=\eta.C.$
\end{itemize}
Moreover, the minimal surface $f^t$ corresponding to the potentials $\eta_t$ is the unique analytic continuation of the Plateau solution $f_t$ with respect to the geodesic polygon $\Gamma_t$. In particular, for $t=\tfrac{1}{2(g+1)},$ $ g\in\mathbb N^{\geq1}$,
the analytic continuation of $f^t$ is the Lawson surface $\xi_{1,g}$ of genus $g.$
\end{theorem}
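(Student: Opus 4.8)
The plan is to propagate the existence of a Fuchsian (i.e.\ trivial-bundle) representative of $\wt\nabla^\lambda_t$ from the endpoints $t\sim 0$ and $t\sim\tfrac14$---where it is already established in \cite{HHT2} and \cite{HHS}---across the whole interval $(0,\tfrac14]$ using the real-analyticity of the Gauss--Codazzi data from Theorem \ref{realanalyticity}. The crucial dichotomy from Section \ref{Fuchsian} is that for each fixed $(t,\lambda)$ the holomorphic bundle $V^\lambda_t$ is either $\mathcal O\oplus\mathcal O$ (semi-stable parabolic structure, giving a genuine Fuchsian system) or $\mathcal O(-1)\oplus\mathcal O(1)$ (the unstable, exceptional case of Proposition \ref{up1extension}). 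So the whole theorem reduces to \emph{excluding} the exceptional bundle for every $\lambda\in\mathbb D_{1+\epsilon}$ and every $t\in(0,\tfrac14]$. First I would record what is already known: $V^\lambda_t$ is trivial at $\lambda=0$ by \cite[Theorem 4.7]{BoHeSch}, and for $\lambda\in\mathbb S^1$ the monodromy is unitarizable, whence by Mehta--Seshadri the parabolic structure is semi-stable and (by the argument in Lemma \ref{lemma_modulipara}, using Proposition \ref{up1extension}) the bundle is $\mathcal O\oplus\mathcal O$ there as well.

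The main work is the open set $0<|\lambda|<1$. Here I would use the symmetry constraints \eqref{delta-sym}--\eqref{tau-sym}: the family $\wt\nabla^\lambda_t$ is equivariant under $\delta$ and $\tau$, so whenever $V^\lambda_t$ is trivial the connection is a \emph{symmetric} Fuchsian system in the sense of Section \ref{sec:modulus}, and its modulus $u(\lambda)$ satisfies the constraints forcing $u$ into the symmetric locus. The key analytic tool is the Remark following Proposition \ref{up1extension}: an unstable logarithmic connection can arise as a limit $\lambda\to\lambda_0$ of stable Fuchsian systems $\nabla^{u_\lambda,s_\lambda}$ \emph{only if} $\lim_{\lambda\to\lambda_0}(u_\lambda+1)s_\lambda=1-4t$. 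The plan is therefore to track the holomorphic function $\lambda\mapsto(u(\lambda)+1)\,s(\lambda)$, which extends holomorphically across any exceptional point (this is exactly the content of \eqref{up1sexpansion}), and to show that the symmetry forces its value to be pinned away from $1-4t$ on the relevant region, so that no exceptional bundle can occur for $0<|\lambda|<1+\epsilon$.

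More precisely, I would argue by contradiction and continuity in $t$. Define $\alpha$ to be the supremum of $s\le\tfrac14$ such that the conclusion (triviality of $V^\lambda_t$ for all $\lambda\in\mathbb D_{1+\epsilon}$, for all $t$ in the relevant subinterval) holds; the endpoint results give $\alpha>0$. If $\alpha<\tfrac14$, then by real-analyticity of the family in $t$ the set of $\lambda$ where $V^\lambda_t$ degenerates to $\mathcal O(-1)\oplus\mathcal O(1)$ must, as $t\uparrow\alpha$, either stay bounded away from $\overline{\mathbb D}$ or hit the closed disc at some $\lambda_0$. The unitarizability on $\mathbb S^1$ rules out $|\lambda_0|=1$, and $\lambda_0=0$ is excluded by \cite[Theorem 4.7]{BoHeSch}, so one would be left with an exceptional point in $0<|\lambda_0|<1$. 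Against this I would play the quantitative obstruction $(u+1)s=1-4t$: using the explicit symmetric normalization (the matrices $A_k^t$ constrained by $A_3=D^{-1}A_1D$, $A_4=D^{-1}A_2D$ of Lemma \ref{lemma_modulipara}) one computes $u$ and $s$ along the symmetric locus and shows the resonance value $1-4t$ is attained only in a way incompatible with the flat family coming from an honest minimal surface, contradicting the minimality of $\alpha$. Finally, the reconstruction and analyticity statements follow by feeding $\eta_t$ through the DPW recipe (\cite[Theorem 1.2]{HHS}, \cite[Section 1.4]{HHT}), the uniqueness of the minimal disc bounding $\Gamma_t$ from Theorem \ref{realanalyticity}, and specializing $t=\tfrac{1}{2(g+1)}$ to recover $\xi_{1,g}$.

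I expect the \textbf{main obstacle} to be precisely the exclusion of the unstable bundle $\mathcal O(-1)\oplus\mathcal O(1)$ for interior $\lambda$ in the closed unit disc: the Mehta--Seshadri argument disposes of $|\lambda|=1$ for free, but for $0<|\lambda|<1$ one has no unitarity and must instead leverage the symmetry together with the $(u+1)s=1-4t$ resonance condition to rule out a jump in bundle type along the real-analytic deformation in $t$.
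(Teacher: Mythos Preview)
Your overall strategy matches the paper's: reduce to excluding the unstable bundle $\mathcal O(-1)\oplus\mathcal O(1)$ for $\lambda$ in the closed unit disc, handle $\lambda\in\mathbb S^1$ via Mehta--Seshadri, and use the $(u+1)s$ obstruction from Proposition~\ref{up1extension} for the interior. However, two points need correcting.

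First, the mechanism you propose for the interior exclusion is not quite right. You want to use the symmetric normalization to ``pin $(u+1)s$ away from $1-4t$'', but symmetry plays no role at this step. The paper's argument is more direct: unstable structures can only occur at points with $u=-1$, so one tracks a continuous branch $t\mapsto\lambda_t$ of the locus $u_t(\lambda)=-1$. Whenever the structure at $\lambda_t$ is \emph{stable}, $s_t(\lambda_t)$ is finite and hence $(u_t+1)s_t$ evaluated at $\lambda_t$ is trivially $0$. By continuity in $t$ this value remains $0$ at a limiting $t=b$; but if the structure there were unstable, \eqref{up1sexpansion} would force the value to be $1-4b\neq 0$ for $b<\tfrac14$. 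That is the contradiction---no computation ``along the symmetric locus'' is needed, and the inequality $t<\tfrac14$ is exactly what makes $1-4t\neq 0$.

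Second, and more seriously, your proposal omits an entire step: even after you know that for each fixed $\lambda$ the connection $\wt\nabla^\lambda_t$ is a Fuchsian system, you must still produce a \emph{single} potential $\eta_t$ whose coefficients are holomorphic in $\lambda$ on $\mathbb D_{1+\epsilon}\setminus\{0\}$ and in the symmetric normal form. This is not automatic. The paper does it by choosing, locally in $\lambda$, representatives as in Lemma~\ref{lemma_modulipara}; the transition functions between overlapping local choices then take values in the finite group generated by $C$ and $D$, and the resulting cocycle over the simply connected disc is trivial, yielding a global choice. A genuine subtlety arises at the finitely many $\lambda_0$ where $\wt\nabla^{\lambda_0}_t$ is \emph{reducible}: there the stabiliser is large and the uniqueness part of Lemma~\ref{lemma_modulipara} fails, so one cannot directly produce a holomorphic local family. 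The paper resolves this by passing to the $4$-fold quotient $\Sigma\to\C P^1$ by $\delta,\tau$, where the pushed-forward connections become irreducible (the weights are $\pm\tfrac14$ at three points and $\pm t$ at one), applying the irreducible argument there, and pulling back. Your proposal does not address this gluing step at all; ``feeding $\eta_t$ through the DPW recipe'' presupposes that $\eta_t$ already exists as a $\lambda$-holomorphic object.
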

\begin{proof}
We start with the Plateau solution for $\Gamma_t$ and consider the family of flat 
 logarithmic connections $\wt \nabla^\lambda_t$ constructed in Theorem \ref{1stnabla}.
The proof consists of two steps. We first show that the induced holomorphic bundle is $V_\lambda^t= \mathcal O \oplus \mathcal O$ for $\lambda\in \mathbb D_{1+\epsilon}\setminus\{0\}$ for an appropriate $\epsilon>0,$ and $t \in (0, \tfrac{1}{4})$ using continuity arguments. In the second step, we then show that we can gauge $\wt\nabla^\lambda_t$ into the symmetric normal form of Proposition \ref{lemma_modulipara} with holomorphic coefficients in $\lambda$ on $\mathbb D_{1+\varepsilon}\setminus\{0\}$,  i.e., such that there is no apparent singularities in $\lambda.$\\

{\bf Step 1:} For $t\sim 0$ and $t\sim \tfrac{1}{4}$ the bundle type $V_\lambda^t$ is shown to be  $\mathcal O\oplus\mathcal O$ for all $\lambda\in \mathbb D_{1+\varepsilon}$  
in \cite{HHT2} and \cite{HHS}, respectively. We want to show that the bundle type remains
$ \mathcal O\oplus\mathcal O$ for all $\lambda\in \overline{\mathbb D_{1}}$  and all $t\in(0,\tfrac{1}{4})$. Moreover, the bundle $V_{\lambda=0} = \mathcal O \oplus \mathcal O$ for every $t\in (0, \tfrac{1}{4}].$
Recall from Section \ref{sec:modulus} and Proposition \ref{up1extension} that, for every $t$ fixed, the function  $u_t\colon \C^* \rightarrow \C P^1,$
\[\lambda\in\C^*\mapsto  u_t(\lambda):= u(\wt\nabla_t^\lambda)\in \C P^1\]
is holomorphic, well-defined and extends holomorphically to $\lambda=0$ with value $u_t(0)=-1,$ as 
the underlying parabolic structure extends to $\lambda=0$ with a nilpotent strongly parabolic Higgs field. The stability of the parabolic structure at $\lambda=0$ follows as in \cite[Theorem 4.7]{BoHeSch}. Further, the holomorphic function $u_t$  is never constant, since otherwise
all connections $\wt\nabla^\lambda_t$ would be gauge equivalent for $\lambda\in S^1$ using the uniqueness part of the Mehta-Seshadri theorem.
Then  $\lambda\mapsto \wt\nabla^\lambda$ would be a constant map which contradicts, among others things,  the Sym-point condition
that the parabolic structure at the Sym-points must be strictly semis-stable while the parabolic structure is stable at $\lambda=0$. This implies that, for each $t$, the values of $\lambda \in\C$ for which $u_t(\lambda)=-1$ are discrete.
Due to the real analyticity of the Plateau solutions in $t$,
the map $t\mapsto \wt\nabla^\lambda_t$ is also real analytic in $t$. \\

By Proposition \ref{lemma_modulipara} logarithmic connections which are not Fuchsian are characterized by 
the property that
their underlying parabolic structure is unstable. For $t\sim 0$ and $t\sim \tfrac{1}{4}$, there are no
unstable parabolic structures inside the unit disc. By Mehta-Seshadri, logarithmic connections inducing unstable parabolic structures
are not unitarisable. Thus, by deforming $t$, unstable parabolic structures can not cross the unit 
$\lambda$-circle (where all connections must be unitarizable) in a continuous deformation.

Proposition \ref{up1extension} shows that the modulus for an unstable connection $\wt\nabla_t^\lambda$ must be $u =-1.$ Thus it remains to show that unstable structures cannot arise as limits of stable structures with $u=-1$ in our setup.
It is important to recall that the family of logarithmic connections $\wt \nabla^\lambda_t$ exist and is well-defined for all $\lambda\in \C^*$ and $t \in (0, \tfrac{1}{4}).$ Consider therefore a continuous sub family of $\wt \nabla^\lambda_t$ given by $t\in(a,b]\subset(0,\tfrac{1}{4})\mapsto\lambda_t\in\C$ such that
\[u_t(\lambda_t)=-1.\]
We want to show that 
if the parabolic structure of $\wt\nabla_t^{\lambda_t}$ is stable for all
$t<b,$ it is also stable at $t=b.$

Since $u_t$ is holomorphic in $\lambda$ (by \eqref{u1}, Proposition \ref{up1extension})  and non-constant, there exist a punctured disc around $\lambda_{t=b}$ such that $s_t = s(\wt \nabla^{\lambda_t}_t)$ is well-defined and holomorphic, see \eqref{nablaus}.
Therefore, the family of holomorphic functions
\[ r(t):=(u_t+1)\cdot s_t\]
 is continuous in $t$.
Since the parabolic structure of $\wt\nabla_t^{\lambda_t}$ is stable for $t<b$, $s_t$ is bounded for all $t<b$ and we obtain
\[((u_t+1)\cdot s_t)(\lambda_t)=0.\]
By continuity this also holds at $t=b, $ but
if the parabolic structure at $t=b$ would be unstable,
then \eqref{up1sexpansion} would give
\[0 = ((u_b+1)\cdot s_b)(\lambda_b)=1-4 b\]
contradicting $b < \tfrac{1}{4}.$
\begin{remark}
This conclusion heavily depends on the inequality $t<\tfrac{1}{4}.$ In fact, 
for the 2-lobed Delaunay tori 
at $t=\tfrac{1}{4}$,
there exists  $\lambda_s$ in the punctured unit disc with $u(\lambda_s)=-1$.
(The spectral parameter $\lambda_s$ is the unique branch value of the spectral curve of the Delaunay torus inside the punctured unit disc).
Then, as shown in \cite[Theorem 4.2]{HHS} in the setup of spectral curves, one can choose to deform the corresponding 
spectral data in direction of higher genus CMC surfaces in different ways by specifying to have either an unstable or a stable parabolic structure at 
$\lambda_s$.
\end{remark}

{\bf Step 2:} To show that the Fuchsian connections $\wt\nabla^\lambda$ can be parametrized with holomorphic coefficients in $\lambda$ on $\mathbb D_{1+\varepsilon} \setminus\{0\}$ with a simple pole at $\lambda=0$ we proceed as in the proof of \cite[Theorem 8]{He2}:

First assume that for every $\lambda_0 \in \mathbb D_1$ we can find an open neighborhood $U_{\lambda _0}\subset \C$ of the $\lambda-$plane such that $\wt\nabla^\lambda$ is locally around $\lambda_0$ gauge equivalent to a family of connections $\nabla^\lambda_{U_{\lambda_0}}$ with the desired symmetries (and holomorphic coefficients in $\lambda$) as in Lemma \ref{lemma_modulipara}. Since the closed disc $\mathbb D_1$ is compact, there exist finitely many points $\lambda_l \in \mathbb D_1$ and finitely many families $\nabla^\lambda_{U_{\lambda_l}}$ in the symmetric normal form such that $\mathbb D_1 \subset {\displaystyle \bigcup_{l} U_{\lambda_l} }$. On the intersection  $U_{\lambda_l} \cap U_{\lambda_k}$ the connections $\nabla^\lambda_{U_{\lambda_l}}$ and $\nabla^\lambda_{U_{\lambda_k}}$ are by construction gauge equivalent. This gives rise to transition functions 
$$g_{lk}\colon U_{\lambda_l} \cap U_{\lambda_k}\rightarrow GL(2, \C).$$

 Due to the uniqueness in Lemma \ref{lemma_modulipara} the image of $g_{lk}$ lies in fact the subgroup generated by the matrices $C$ and $D.$ Moreover, these transition functions $\{g_{lk}\}$ defines a cocycle on $\mathbb D_{1+\epsilon}.$ This cocycle is integrable, as $\mathbb D_{1+\epsilon}$ is simply connected, which gives rise to the desired DPW potential on all $\mathbb D_{1+\epsilon}.$ \\

It remains to show the existence of  $\nabla^\lambda_{U_{\lambda_0}}$ for every $\lambda_0 \in \mathbb D_1.$
If $\wt \nabla^{\lambda_0}$ is irreducible,  this follows from 
the uniqueness part of Lemma
\ref{lemma_modulipara} and the fact that the stabiliser of an irreducible connection is $\pm$Id.
At $\lambda=0$ the existence of $\nabla^\lambda_{U_0}$ follows from the fact that the induced holomorphic structure is stable and the arguments for irreducible connections in the proof of Lemma  \ref{lemma_modulipara}  carries over verbatim.

 For the (finitely many) $\lambda_0$ for which $\wt\nabla^{\lambda_0}$ is reducible, i.e., where the connection is given by the direct sum of line bundle connections, we proceed as follows:  consider the 4-fold covering $\pi \colon \Sigma \to\C P^1$ by taking the quotient with respect to $\delta$ and $\tau$. Using the push-forward construction  (see \cite{Biswas})
one obtains logarithmic connections on a 4-punctured sphere with local eigenvalues
$\pm\tfrac{1}{4}$ at 3 of the 4 singular points, and $\pm t$ at fourth singular point (which is the image of
$p_1,\dots,p_4$ under $\pi$). On the quotient, as $t\in(0, \tfrac{1}{4}]$, there are no reducible connections, see \cite[Biswas conditions]{HH_abel}. Therefore, we can proceed as in the irreducible case on the quotient to  obtain $\wh \nabla^\lambda_{U_{\lambda_0}}$ around $\lambda_0.$ Pulling back the connections $\wh \nabla^\lambda_{U_{\lambda_0}}$  by the 4-fold covering to $\Sigma$, and desingularising at the apparent singularities corresponding to the eigenvalues $\pm \tfrac{1}{4}$ yields the desired $ \nabla^\lambda_{U_{\lambda_0}}$  of the form of Lemma \ref{lemma_modulipara}, well-defined around $\lambda_0.$  
\end{proof}

We end the paper by specifying the symmetric DPW potential $\eta$ for the Lawson surfaces $\xi_{1,g}$. Following \cite[Section 2.1]{HHT2}, $\eta$ is of the form
\[\eta = \left(
\begin{array}{cc}
 -\frac{4 a z}{z^4+1} & \frac{2 \sqrt{2} \left(b \left(z^2-1\right)-c \left(z^2+1\right)\right)}{z^4+1} \\
 \frac{2 \sqrt{2} \left(b \left(z^2-1\right)+c \left(z^2+1\right)\right)}{z^4+1} & \frac{4 a z}{z^4+1} \\
\end{array}
\right)dz\]
for holomorphic functions $a,b,c\colon \mathbb D_{1+\epsilon}^*\to\C$
with first order poles at $\lambda=0$, 
and
\[a^2-b^2-c^2=-t^2\]
with $t=\tfrac{1}{2g+2}.$
Moreover, the residue of the DPW potential at $\lambda=0$ is nilpotent (and non-zero if 
$t>0$) which is equivalent to
\[\text{Res}_{\lambda=0}\left (-\frac{b^2}{c^2} \right)=-1.\]
At $t=0$ we have by \cite{HHT2} 
\[a=0=b=c.\]
 In \cite[Section 5]{HHT2} we have given an iterative algorithm to compute the Taylor expansion of $a_t(\lambda)$, $b_t(\lambda)$ and $c_t(\lambda)$  in $t$  at $t=0$. It turns out that the $n$-th $t$-derivative  of $a_t,$ $b_t$ and $c_t$ at $t=0$ are Laurent polynomials in $\lambda$ obtained from solving (non-degenerate) finite-dimensional linear systems with coefficients given by multi-polylogarithms. 
For example, the first order derivatives are given by 
\begin{equation}
\begin{split}
\dot a&=\tfrac{1}{2}(\lambda^{-1}-\lambda)\\
\dot b&=\dot c=-\tfrac{1}{2\sqrt{2}}(\lambda^{-1}+\lambda).\\
\end{split}
\end{equation}
At $t=\tfrac{1}{4}$ the potential can be computed explicitly 
in terms of elliptic functions.

\end{document}